\documentclass[reqno]{amsproc}
\usepackage[utf8]{inputenc}
\usepackage{amssymb}
\usepackage{euscript}
\usepackage{tikz}
\usepackage{extarrows}
\usepackage{mathrsfs}
\usepackage{a4wide}
\usepackage{setspace}

\newtheorem{thm}{Theorem}%[section]
\newtheorem{cor}[thm]{Corollary}
\newtheorem{lem}[thm]{Lemma}
\newtheorem{pro}[thm]{Proposition}

\theoremstyle{remark}
\newtheorem{rem}[thm]{Remark}

\newtheorem*{opq}{Question}

\theoremstyle{definition}
\newtheorem{exa}[thm]{Example}

\newcommand*{\card}[1]{\mathrm{card}(#1)}

\DeclareMathOperator{\D}{d\hspace{-0.25ex}}

\DeclareMathOperator{\dzii}{{\mathsf{Chi}}}

\DeclareMathOperator{\koo}{{\mathsf{root}}}

\DeclareMathOperator{\paa}{{\mathsf{par}}}

\DeclareMathOperator{\M}{m}

\newcommand*{\ascr}{\mathscr A}

\newcommand*{\borel}[1]{{\mathfrak B}(#1)}

\newcommand*{\cbb}{\mathbb C}

\newcommand*{\cfw}{C_{\phi,w}}

\newcommand*{\esf}{\mathsf{E}}

\newcommand*{\efw}{\mathsf{E}_{\phi,w}}

\newcommand*{\dz}[1]{{\EuScript D}(#1)}
\newcommand*{\dzi}[1]{\dzii(#1)}
\newcommand*{\dzin}[2]{\dzii^{\langle#1\rangle}(#2)}
\newcommand*{\dzn}[1]{{\EuScript D}^\infty(#1)}
\newcommand*{\ee}{\EuScript E}

\newcommand*{\Ge}{\geqslant}

\newcommand*{\hh}{\mathcal H}

\newcommand*{\hsf}{{\mathsf h}}

\newcommand*{\hfw}{{\mathsf h}_{\phi,w}}

\newcommand*{\jd}[1]{\EuScript N(#1)}

\newcommand*{\lambdab}{{\boldsymbol\lambda}}

\newcommand*{\Le}{\leqslant}

\newcommand*{\nbb}{\mathbb N}

\newcommand*{\ogr}[1]{\boldsymbol B(#1)}
\newcommand*{\ob}[1]{{\EuScript R}(#1)}

\newcommand*{\pa}[1]{\paa(#1)}

\newcommand*{\psf}{\mathsf{P}}
\newcommand*{\pfw}{\mathsf{P}_{\phi,w}}
\newcommand*{\rbb}{\mathbb R}
\newcommand*{\rbop}{{\overline{\rbb}_+}}
\newcommand*{\slam}{S_{\boldsymbol \lambda}}
\newcommand*{\smalloplus}{\raise0pt\hbox{$\scriptscriptstyle \oplus$}}

\newcommand*{\tcal}{{\mathscr T}}

\newcommand*{\zbb}{\mathbb Z}

\begin{document}
\setstretch{1.1}
\title[Centered {\em wco}'s  on $L^2$-spaces]{Centered weighted composition operators on $L^2$-spaces revisited}

\author[P.\ Budzy\'{n}ski]{Piotr Budzy\'{n}ski}
\address{Katedra Zastosowa\'{n} Matematyki, Uniwersytet Rolniczy w Krakowie, ul.\ Balicka 253c, 30-198 Kra\-k\'ow, Poland}
\email{piotr.budzynski@urk.edu.pl}

\date{\today}
\keywords{weighted composition operator, centered operator, half-centered operator, weighted shift on a directed tree}
\subjclass[2020]{Primary 47B37; Secondary 47A15, 47B20, 47B33}

\begin{abstract}
Centered weighted composition operators on $L^2$-spaces are characterized. The characterization is obtained without the assumption that the operator is a product of a multiplication and a composition operator. The concept of spectrally half-centered operators is introduced, and it is shown that unbounded weighted composition operators are spectrally half-centered provided their powers are closed and densely defined. A criteria for centered weighted shifts on directed trees of types I--IV are provided. Various examples are presented.
\end{abstract}
\maketitle

\section{Introduction}
A bounded linear operator $T$ acting on a (complex) Hilbert space $\hh$ is called {\em centered} if 
\begin{align*}
\ldots T^{2}T^{2*}, TT^{*}, T^{*}T, T^{2*}T^{2}, \ldots \text{ commute}.
\end{align*}
Centered operators generalize (classical) weighted shifts. They were introduced by Morrel in \cite{1971-cmbs-morrel}. Later, in \cite{1974-sm-morrel-muhly}, Morrel and Muhly provided a structural classification of these operators based on the properties of their phases. They showed that a centered operator is (up to a unitary equivalence) an orthogonal sum of unilateral shifts with operator weights (types I, II, and III) and a weighted translation operator on a space of vector-valued functions (type IV). The class of centered operators contains all quasinormal operators and is contained in the class of {\em weakly centered} operators. The latter class was originally studied by Campbell \cite{1975-pams-campbell}. Later, Paulsen, Pearcy, and Petrović \cite{1995-jfa-paulsen-pearcy-petrovic, 1992-jfa-petrovic} investigated centered and weakly centered operators in the context of polynomial boundedness and dilation theory. Recently, these concepts have seen renewed interest in the context of C*-modules, where Liu et al. \cite{2018-laa-liu-luo-xu} introduced the hierarchy of $n$-centered operators.

Weighted composition operators are of the form
$$f\longmapsto w \cdot (f\circ \phi),$$
with $f$ varying over an assorted function space. Classical examples include the space $\mathcal{C}(X)$ of continuous functions over a compact Hausdorff space $X$, the Hardy space $H^1(\mathbb{D})$ of analytic functions on the unit disc $\mathbb{D}$, and the space $L^2(\mu)$ of all square-integrable complex-valued functions over a $\sigma$-finite measure space $(X,\ascr, \mu)$. In this paper we investigate {\em wco}'s  (we will use this abbreviation for weighted composition operators throughout the paper) acting on the last of the mentioned spaces. 

In view of the results due to Morrel and Muhly, every centered operator is, in a sense, a {\em wco} of some general type. Interestingly, it is quite easy to provide an example of a non-centered ``classical'' {\em wco} on $L^2$-space. In fact, one can easily find $T=\cfw$ such that 
\begin{align*}
T^*T\text{ commutes with }TT^*
\end{align*}
does not hold. Operators satisfying the above are called {\em weakly centered} (some authors call them {\em binormal}). Weakly centered {\em wco}'s  have been fully characterized recently (see \cite{2025-arxiv-budzynski}).

Till the publication of \cite{2018-lnim-budzynski-jablonski-jung-stochel}, it was quite a common practice to treat {\em wco}'s  as if they all were products of multiplication operators and composition operators:
\begin{align}\label{cap01}
    \cfw = M_w C_\phi.
\end{align}
In particular, this assumption underpinned the characterization of centered {\em wco}'s attempted in \cite{2017-f-jabbarzadeh-bakhshkandi}. However, as detailed in \cite[Chapter 7]{2018-lnim-budzynski-jablonski-jung-stochel}, \eqref{cap01} does not hold in general. Consequently, characterizations relying on the Radon-Nikodym derivative of $C_\phi$ are incomplete. In fact, one can construct a bounded {\em wco} $\cfw$, equivalent to a unilateral shift of multiplicity 1, such that the associated composition operator $C_\phi$ is not even well defined (see e.g., \cite[Ex. 102]{2018-lnim-budzynski-jablonski-jung-stochel}). Such $\cfw$ is centered.  In this paper, we revisit the problem of characterizing centered {\em wco}'s in full generality, avoiding a priori restrictions on the existence of $C_\phi$. The characterization in Theorem \ref{dan01} constitutes our main result.

A significant portion of the paper is dedicated to {\em weighted shifts on directed trees} ({\em wsdt}'s). While classical weighted shifts (on $\zbb_+$ or $\zbb$) are always centered, {\em wsdt}'s offer a much richer structure. We apply our general characterization to provide a detailed classification of centered {\em wsdt}'s corresponding to the Morrel-Muhly types. In particular, we show that the presence of leaves or the lack of a root constrains the operator to specific types (I, II, or III), while the rootless and leafless case admits type I and type IV operators depending on the asymptotic behaviour of the weights. 

We illustrate our investigations with examples. For this we use {\em wsdt}'s but also {\em wco}'s induced by invertible transformations of $\rbb^\kappa$, covering both ``discrete'' and ``continuous'' measure space cases.  

We show that all {\em wco}'s  are {\em half-centered}. Recall, that an operator $T\in \ogr{\hh}$ is said to be half-centered if the set $\{T^{n*}T^{n}\colon n\in\zbb_+\}$ is composed of commuting operators. Giselsson showed that a product $M_wC_\phi$ of a bounded multiplication operator $M_w$ and a bounded composition operator $C_\phi$ is half-centered (see \cite[Proposition 2.5]{2018-oam-giselsson}). In view of the discussion above, Giselsson's result apply only to a subclass of {\em wco}'s . We generalize his result and provide its unbounded analogue in Proposition \ref{tolo1}. We also  provide a {\em wco}-specific version of his criterion \cite[Proposition 2.1]{2018-oam-giselsson} in Theorem \ref{cieczka01}.

In a subsequent research we will investigate {\em spectrally centered} and {\em spectrally $n$-weakly centered} {\em wco}'s. 
\section{Preliminaries}
We write $\zbb$, $\rbb$, and $\cbb$ for the sets of integers, real numbers, and complex numbers, respectively. By $\nbb$, $\zbb_+$, and $\rbb_+$ we denote the sets of positive integers, nonnegative integers, and nonnegative real numbers, respectively. $\rbop$ stands for $\rbb_+ \cup \{\infty\}$; $\borel{\cbb}$ denotes the $\sigma$-algebra of Borel subsets of $\cbb$.

\subsection{General operator theory}
$\ogr{\hh}$ stands for a $\mathcal{C}^*$-algebra of bounded operators on a (complex) Hilbert space $\hh$. If $A$ is an operator in $\hh$ (possibly unbounded), then $\dz{A}$, $\jd{A}$, $\ob{A}$, and $A^*$ stand for the domain, the kernel, the range, and the adjoint of $A$, respectively. %A densely defined operator $A$ in $\hh$ is {\em hyponormal} if $\dz{A}\subseteq \dz{A^*}$ and $\|A^*f\| \leqslant \|Af\|$ for all $f\in\dz{A}$.

A closed subspace $\mathcal{M}$ of $\hh$ is {\em invariant} for $A$ if  $A\big(\mathcal{M}\cap\dz{A}\big)\subseteq \mathcal{M}$ or, equivalently $P_\mathcal{M}AP_\mathcal{M}=AP_\mathcal{M}$, where $P_\mathcal{M}$ is the orthogonal projection onto $\mathcal{M}$. If $P_\mathcal{M} \dz{A}\subseteq \dz{A}$ and both $\mathcal{M}$ and $\mathcal{M}^\perp$ are invariant for $A$, then $\mathcal{M}$ is said to be {\em reducing} for $A$; this is equivalent to the following 
\begin{align*}
P_{\mathcal{M}}A\subseteq AP_{\mathcal{M}}.  
\end{align*}

Recall that for every closed densely defined operator $A$ in a Hilbert space $\hh$ there exists a (unique) partial isometry $U$ such that $\jd{U}=\jd{A}$ and $A=U|A|$, where $|A|$ is the square root of $A^*A$; the operator $U$ is called the {\em phase} of $A$ and $|A|$ is the {\em modulus} of $A$.

\subsection{Weighted composition operators}
Throughout the paper we assume that $(X,\ascr, \mu)$ is a $\sigma$-finite measure space, $\phi$ is an $\ascr$-measurable {\em transformation} of $X$ (i.e., an $\ascr$-measurable mapping $\phi\colon X\to X$) and $w\colon X\to\cbb$ is $\ascr$-measurable.

Let $\mu_w$ and $\mu_w\circ\phi^{-1}$ be measures on $\ascr$ defined by $\mu_w(\sigma)=\int_\sigma|w|^2\D\mu$ and $\mu_w\circ \phi^{-1}(\sigma)=\mu_w\big(\phi^{-1}(\sigma)\big)$ for $\sigma\in\ascr$. Assume that $\mu_w\circ\phi^{-1}$ is absolutely continuous with respect to  $\mu$. Then the operator 
\begin{align*}
\cfw \colon L^2(\mu) \supseteq \dz{\cfw} \to L^2(\mu)    
\end{align*}
given by
\begin{align*}
\dz{\cfw} = \{f \in L^2(\mu) \colon w \cdot (f\circ \phi) \in L^2(\mu)\},\quad
\cfw f  = w \cdot (f\circ \phi), \quad f \in \dz{\cfw},
\end{align*}
is well defined (see \cite[Proposition 7]{2018-lnim-budzynski-jablonski-jung-stochel}); here, as usual, $L^2(\mu)$ stands for the complex Hilbert space of all square $\mu$-integrable $\ascr$-measurable complex functions on $X$ (with the standard inner product). $\cfw$ is called a {\em weighted composition operator} (abbreviated to {\em wco} throughout the paper). By the Radon-Nikodym theorem (cf.\ \cite[Theorem 2.2.1]{ash}), there exists a unique (up to a set of $\mu$-measure zero) $\ascr$-measurable function $\hfw\colon X \to \rbop$ such that
 \begin{align*}
\mu_w \circ \phi^{-1}(\varDelta) = \int_{\varDelta} \hfw \D \mu, \quad
\varDelta \in \ascr.
\end{align*}
As it follows from \cite[Theorem 1.6.12]{ash} and \cite[Theorem 1.29]{rud}, for every $\ascr$-measurable function $f \colon X \to \rbop$ (or for every $\ascr$-measurable function $f\colon X \to \cbb$ such that $f\circ \phi \in L^1(\mu_w)$), we have
\begin{align*}
\int_X f \circ \phi \D\mu_w = \int_X f \, \hfw \D \mu.
\end{align*}
In particular, this implies that $\dz{\cfw}=L^2\big((1+\hfw)\D\mu\big)$. Recall that $\cfw$ is a bounded operator defined on the whole of $L^2(\mu)$ if and only if $\hfw$ is $\mu$-essentially bounded and, if this is the case, $\|\cfw\|^2=\|\hfw\|_{L^\infty(\mu)}$ (see \cite[Proposition 8]{2018-lnim-budzynski-jablonski-jung-stochel}).

Assume that $\cfw$ is densely defined. Then, for a given $\ascr$-measurable function $f\colon X\to \rbop$ (or $f\colon X\to \cbb$ such that $f\in L^p(\mu_w)$ with some $1\leqslant p<\infty$) one may consider $\efw(f)$, the conditional expectation of $f$ with respect to the $\sigma$-algebra $\phi^{-1}(\ascr)$ and the measure $\mu_w$ (see \cite[Section 2.4]{2018-lnim-budzynski-jablonski-jung-stochel}). Also, there is a unique (up to sets of $\mu$-measure zero) $\ascr$-measurable function $g$ on $X$ such that $g=0$ a.e. $[\mu]$ on $\{\hfw=0\}$ and $\efw(f)=g\circ\phi$ a.e. $[\mu_w]$. We will denote this function by $\efw(f)\circ\phi^{-1}$. Recall that $\efw$ can be regarded as a linear contraction on $L^p(\mu_w)$, $p\in[1,\infty]$, which leaves invariant the convex cone $L^p_+(\mu_w)$ of all $\rbb_+$-valued members of $L^2(\mu_w)$; moreover, $\efw$ is an orthogonal projection on $L^2(\mu_w)$.

Considering $w\equiv 1$ leads to a large subclass of weighted composition operators, the so-called composition operators. Namely, assuming that $\phi\colon X\to X$ is $\ascr$-measurable and satisfies $\mu\circ\phi^{-1}\ll\mu$, the operator $C_{\phi, 1}$ is well defined. We call it the {\em composition operator} induced by $\phi$; for brevity we use the notation $C_\phi:=C_{\phi,1}$. The corresponding Radon-Nikodym derivative $\mathsf{h}_{\phi,1}$ and the conditional expectation $\esf_{\phi,1}$ (see the definition below) are denoted by $\mathsf{h}_\phi$ and $\esf_\phi$, respectively. We caution the reader that $C_\phi$ might not be well defined even if $\cfw$ is an isometry. In particular, if this is the case, $\hsf_\phi$ and $\esf_{\phi}$ do not exist whereas $\hfw$ and $\efw$ do (see \cite[pg. 71]{2018-lnim-budzynski-jablonski-jung-stochel}).

Now we introduce some useful notation. First, we set
\begin{align*}
w_0=\chi_X \quad \text{and}\quad w_{n+1}=\prod_{j=0}^n w\circ\phi^j \text{ for }n\in\zbb_+.
\end{align*}
In particular, $w_1=w$. It is known (see \cite[Lemma 26]{2018-lnim-budzynski-jablonski-jung-stochel}) that for every $n\in\nbb$, $C_{\phi^n,w_n}$ is well defined and $\cfw^n\subseteq C_{\phi^n,w_n}$. 
For brevity, whenever this leads to no confusion, we use the following: 
\begin{align*}
\hsf_n=\hsf_{\phi^n, w_n},\quad\esf_n=\esf_{\phi^n, w_n},\quad n\in\nbb.
\end{align*}
(Recall that, $\esf_{n}$ exists whenever $\hsf_n<\infty$ a.e. $[\mu]$.) A version of \cite[Lemma 26]{2018-lnim-budzynski-jablonski-jung-stochel} will be helpful in our further consideration (the proof is essentially the same but we provide it for completeness).
\begin{lem}\label{lucek3}
Let $n\in\nbb$, $k\in\zbb_+$. Assume that $C_{\phi, w}$ is well defined and $C_{\phi^n, {w}_n}$ is densely defined. Then the following is satisfied
\begin{align}\label{cap02}
    \hsf_{\phi^{n+k},w_{n+k}}=\esf_{\phi^n,w_n}(\hsf_{\phi^k, w_k})\circ\phi^{-n}\cdot \hsf_{\phi^{n},w_{n}}\text{ a.e. }[\mu]
\end{align}
In particular, if $C_{\phi, w}\in\ogr{L^2(\mu)}$, then the following conditions hold:
\begin{align}\label{avici01}
\hsf_{\phi^{m+1},w_{m+1}}\circ \phi=\esf_{\phi, w}\big(\hsf_{\phi^m, w_m}\big) \cdot \hsf_{\phi, w}\circ \phi\quad \text{a.e. $[\mu_{w}]$},\quad m\in\zbb_+.
\end{align}
\end{lem}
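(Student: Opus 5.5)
Both identities follow from testing against indicator functions and using the defining relations of $\hsf$ and $\esf$. Throughout, $\hsf_0=\hsf_{\phi^0,w_0}=1$, since $w_0=\chi_X$ and $\phi^0=\mathrm{id}$; for $k=0$ the identity \eqref{cap02} is then trivial, because $\esf_{\phi^n,w_n}(1)=1$.

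\textbf{Product formula for the weights.} First record
\[
w_{n+k}=w_n\cdot (w_k\circ\phi^n),
\]
which is immediate from the definition $w_{m}=\prod_{j=0}^{m-1}w\circ\phi^j$.

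\textbf{Proof of \eqref{cap02}.} Fix $\varDelta\in\ascr$ and compute $\mu_{w_{n+k}}\circ\phi^{-(n+k)}(\varDelta)$ in three steps.
\begin{align*}
\int_X \chi_\varDelta\circ\phi^{n+k}\,|w_{n+k}|^2\D\mu
&=\int_X \big(\chi_\varDelta\circ\phi^{k}\,|w_k|^2\big)\circ\phi^n\,|w_n|^2\D\mu\\
&=\int_X \chi_\varDelta\circ\phi^{k}\,|w_k|^2\,\hsf_n\D\mu\\
&=\int_X (\chi_\varDelta\,\hsf_k)\circ\phi^{-0}\cdot\ldots
\end{align*}
More precisely, the three steps are as follows.
\begin{itemize}
\item The first equality is the product formula above.
\item The second is the change-of-variables formula $\int f\circ\phi^n\D\mu_{w_n}=\int f\,\hsf_n\D\mu$, applied to the nonnegative function $f=\chi_\varDelta\circ\phi^k\,|w_k|^2$.
\item For the third step, do not use $\hsf_k$ directly, since $\hsf_n$ stands inside. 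Instead, go back to the $\mu_{w_n}$-side, where $\hsf_n$ is not present:
\[
\int_X \chi_\varDelta\circ\phi^{n+k}\,|w_{n+k}|^2\D\mu=\int_X \chi_\varDelta\circ\phi^{n+k}\, |w_k\circ\phi^n|^2\D\mu_{w_n}.
\]
\end{itemize}

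\textbf{Route via conditional expectation.} The cleaner route is to swap the roles of $n$ and $k$: write $w_{n+k}=w_k\cdot(w_n\circ\phi^k)$ and $\phi^{n+k}=\phi^n\circ\phi^k$. Then change variables first with respect to $\phi^k$ and $w_k$:
\[
\int_X \big(\chi_\varDelta\circ\phi^n\,|w_n|^2\big)\circ\phi^k\D\mu_{w_k}=\int_X \chi_\varDelta\circ\phi^n\,|w_n|^2\,\hsf_k\D\mu=\int_X (\chi_\varDelta\circ\phi^n)\,\hsf_k\D\mu_{w_n}.
\]
Since $\chi_\varDelta\circ\phi^n$ is $(\phi^n)^{-1}(\ascr)$-measurable, the defining property of $\esf_n$ (which exists because $C_{\phi^n,w_n}$ is densely defined, i.e.\ $\hsf_n<\infty$ a.e.) replaces $\hsf_k$ by $\esf_n(\hsf_k)=g\circ\phi^n$, where $g=\esf_n(\hsf_k)\circ\phi^{-n}$. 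This gives
\[
\int_X (\chi_\varDelta\, g)\circ\phi^n\D\mu_{w_n}=\int_\varDelta g\,\hsf_n\D\mu ,
\]
again by change of variables. Since $\varDelta$ is arbitrary and the measures are $\sigma$-finite, uniqueness of the Radon--Nikodym derivative yields \eqref{cap02}.

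\textbf{Proof of \eqref{avici01}.} Apply \eqref{cap02} with $n=1$ and $k=m$, so that
\[
\hsf_{m+1}=\esf_{\phi,w}(\hsf_m)\circ\phi^{-1}\cdot\hsf_{\phi,w}.
\]
Composing with $\phi$ gives the claim a.e.\ $[\mu_w]$, by the defining identity $(\esf(f)\circ\phi^{-1})\circ\phi=\esf(f)$ a.e.\ $[\mu_w]$. Here boundedness of $\cfw$ guarantees that $\cfw$, and hence every $C_{\phi^n,w_n}$, is densely defined; indeed $\hsf_n$ is essentially bounded by induction via \eqref{cap02}.

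\textbf{Main obstacle.} The delicate point is justifying the conditional expectation step when $\hsf_k$ may fail to lie in $L^p(\mu_{w_n})$. I would handle this using the version of $\esf_n$ on nonnegative measurable functions. I would also check that "a.e.\ $[\mu]$" transfers correctly: composition with $\phi^n$ respects null sets, because $\mu_{w_n}\circ\phi^{-n}\ll\mu$.
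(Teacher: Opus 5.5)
Your final route is the paper's computation step for step. You split $w_{n+k}=w_k\cdot(w_n\circ\phi^k)$ and change variables along $\phi^k$, then replace $\hsf_k$ by $\esf_{\phi^n,w_n}(\hsf_k)$ against the $(\phi^n)^{-1}(\ascr)$-measurable indicator, then change variables along $\phi^n$, and you get \eqref{avici01} by taking $n=1$ and composing with $\phi$. The abandoned first chain, ending in the garbled $\phi^{-0}\cdot\ldots$ line, should simply be deleted.
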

\begin{proof}
In view of \cite[Lemmma 26]{2018-lnim-budzynski-jablonski-jung-stochel}, $\hsf_{{n+k}}$, $\hsf_{{n}}$, and $\hsf_{{k}}$ exist. For every $\varDelta\in\ascr$ we have\allowdisplaybreaks
\begin{align*}
\mu_{w_{n+k}}\bigg(\big(\phi^{n+k}\big)^{-1}(\varDelta)\bigg)&=\int \chi_{\phi^{-n}(\varDelta)}\circ\phi^k \cdot |w_n|^2\circ\phi^k \D\mu_{w_k}=\int \chi_{\phi^{-n}(\varDelta)}\hsf_{k} \D\mu_{w_n}\\
&=\int \chi_{\phi^{-n}(\varDelta)} \esf_{\phi^{n}
w_n}(\hsf_{\phi^k,w_k}) \D\mu_{w_n}=\int_{\varDelta} \esf_{n}(\hsf_{k})\circ\phi^{-n} \cdot \hsf_{{n}}\D\mu.
\end{align*}
This completes the proof of \eqref{cap02}. Obviously, \eqref{avici01} follows from \eqref{cap02} and \cite[Lemma 5 and (2.10)]{2018-lnim-budzynski-jablonski-jung-stochel}.
\end{proof}
We also set:
\begin{align}\label{djo01}
w_{\phi,n}=\frac{w}{\sqrt{\hfw\circ\phi}}\cdot\frac{w\circ \phi}{\sqrt{\hfw\circ \phi^2}}\cdots\frac{w\circ\phi^{n-1}}{\sqrt{\hfw\circ\phi^n}},\quad n\in\nbb,
\end{align}
and
\begin{align}\label{djo02}
w_{n,\phi}=\frac{w_n}{\sqrt{\hsf_{\phi^n,w_n}\circ\phi^n}},\quad n\in\nbb.
\end{align}
In particular, $w_{1,\phi}=\frac{w}{\sqrt{\hfw\circ\phi}}$.

In view of \cite[Theorem 18]{2018-lnim-budzynski-jablonski-jung-stochel}, assuming $\cfw$ is densely defined, the phase of $\cfw$ is equal to $C_{\phi,w_{1,\phi}}$ and the modulus of $\cfw$ is equal to $M_{\sqrt{\hfw}}$ (we use here a somewhat different notation than in \cite{2018-lnim-budzynski-jablonski-jung-stochel}). Here and later on, for any given an $\ascr$-measurable function $g\colon X\to \cbb$, $M_g$ denotes the operator of multiplication by $g$ acting in $L^2(\mu)$.

\section{Main results}
Our first result is an analogue of \cite[Proposition 2.5]{2018-oam-giselsson}. Specifically, we prove that every ``power closed'' weighted composition operators whose $C^\infty$-vectors are dense in $L^2(\mu)$, is spectrally half-centered. In our considerations we are not restricted to {\em wco}'s  that are products of multiplication operators and composition operators. In fact, we carry out without assuming that the operators are bounded if this does not seem to be necessary. For this we first generalize the definition of half-centered operators to the unbounded setting. 

Let $T$ be an operator in $\hh$ such that $T^n$ is densely defined and closed for every $n\in\nbb$. This implies that every $T^{n*}T^n$ is selfadjoint. In particular, there exists a spectral measure $E_{1,n}$ of $T^{n*}T^n$. We say that $T$ is {\em spectrally half-centered} if and only if
\begin{align}\label{rzaska01}
E_{1,n}(\sigma)E_{1,m}(\omega)=E_{1,m}(\omega)E_{1,n}(\sigma),\quad \sigma,\omega\in\borel{\cbb},\ n,m \in \nbb.
\end{align}
Clearly, for $T\in\ogr{\hh}$, $T$ is half-centered if and only if $T$ is spectrally half-centered.

Recall that $\dzn{\cfw}:=\bigcap_{n=1}^\infty \dz{\cfw^n}$.
\begin{pro}\label{tolo1}
Assume that $\cfw^n$ is closed for every $n\in\nbb$ and $\overline{\dzn{\cfw}}=L^2(\mu)$. Then $\cfw$ is spectrally half-centered.
\end{pro}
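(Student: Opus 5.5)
The plan is to show that each $T^{n*}T^n$, with $T=\cfw$, is a multiplication operator, namely $M_{\hsf_n}$. Multiplication operators by measurable $\rbb_+$-valued functions have spectral measures $E(\sigma)=M_{\chi_{\hsf_n^{-1}(\sigma)}}$. Any two such projections are multiplications by characteristic functions, so they commute, and \eqref{rzaska01} follows at once.

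The first step is to identify $T^n$ with $C_{\phi^n,w_n}$. By \cite[Lemma 26]{2018-lnim-budzynski-jablonski-jung-stochel} we have $\cfw^n\subseteq C_{\phi^n,w_n}$. To upgrade this inclusion to equality, I would use the hypotheses that $\cfw^n$ is closed and that $\dzn{\cfw}$ is dense.

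A natural route is to show that $\dz{\cfw^n}$ is a core for $C_{\phi^n,w_n}$. Recall $\dz{C_{\phi^n,w_n}}=L^2((1+\hsf_n)\D\mu)$. For this it suffices to know that $\dz{\cfw^n}$ is closed under multiplication by characteristic functions $\chi_\varDelta$, and that it contains functions nonvanishing a.e. on a set where every $\hsf_j$ is finite. Indeed, $\dz{\cfw^n}$ is described by integrability conditions $\int |f|^2 \hsf_j\D\mu<\infty$ for $j\Le n$, obtained by iterating the identity $\int f\circ\phi\D\mu_w=\int f\hfw\D\mu$ and using \cite[Lemma 26]{2018-lnim-budzynski-jablonski-jung-stochel}. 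So $\dz{\cfw^n}=L^2((1+\hsf_1+\dots+\hsf_n)\D\mu)$. Density of $\dzn{\cfw}$ forces $\hsf_j<\infty$ a.e. for every $j$. Truncations $\chi_{\{\hsf_1+\dots+\hsf_n\Le k\}}f$ then approximate any $f\in L^2((1+\hsf_n)\D\mu)$ in the graph norm of $C_{\phi^n,w_n}$, by dominated convergence. Closedness of $\cfw^n$ then gives $\cfw^n=C_{\phi^n,w_n}$.

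Once the identification is made, I would compute the modulus: by \cite[Theorem 18]{2018-lnim-budzynski-jablonski-jung-stochel} applied to the densely defined wco $C_{\phi^n,w_n}$, its modulus is $M_{\sqrt{\hsf_n}}$. Hence $T^{n*}T^n=M_{\hsf_n}$ and $E_{1,n}(\sigma)=M_{\chi_{\hsf_n^{-1}(\sigma)}}$, and these commute for all $n,m$.

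I expect the main obstacle to be the identification step. Specifically, one must justify the description of $\dz{\cfw^n}$ via the functions $\hsf_j$, which requires care with unbounded iterates and with the measures $\mu_{w_j}$. If the core argument proves awkward, I would instead use an alternative route. Since $\cfw^n$ is closed and densely defined, $\cfw^{n*}\cfw^n$ is selfadjoint, and by the inclusion above it satisfies $\cfw^{n*}\cfw^n\supseteq$ the restriction of $M_{\hsf_n}$ to $\dz{\cfw^{n}}\cap\dz{M_{\hsf_n}}$. One would then show directly that this intersection is a core for $M_{\hsf_n}$ and use maximality of selfadjoint operators.
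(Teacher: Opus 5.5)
Your proposal is correct and has the same structure as the paper's proof: identify $\cfw^n$ with $C_{\phi^n,w_n}$, obtain $\cfw^{n*}\cfw^n=M_{\hsf_n}$ from \cite[Theorem 18]{2018-lnim-budzynski-jablonski-jung-stochel}, and note that the spectral projections are multiplications by characteristic functions, so they commute. The one difference is that the paper cites \cite[Theorem 47 and Lemma 44]{2018-lnim-budzynski-jablonski-jung-stochel} for $\cfw^n=C_{\phi^n,w_n}$, whereas you prove it directly. You show that $\dz{\cfw^n}=L^2\big((1+\hsf_1+\dots+\hsf_n)\D\mu\big)$ is a core for $C_{\phi^n,w_n}$, using the truncations $\chi_{\{\hsf_1+\dots+\hsf_n\Le k\}}f$ and the fact that density forces each $\hsf_j<\infty$ a.e., and then you invoke closedness; this argument is sound.
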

\begin{proof}
In view of \cite[Theorem 47 and Lemma 44]{2018-lnim-budzynski-jablonski-jung-stochel}, $\cfw^n=C_{\phi^n,w_n}$ for every $n\in\nbb$. Thus, using \cite[Theorem 18]{2018-lnim-budzynski-jablonski-jung-stochel} we get
\begin{align}\label{rzaska02}
\cfw^{n*}\cfw^n=M_{\hsf_{n}},\quad n\in\nbb.
\end{align}
Hence, the spectral measure $E_{1,n}$ of $\cfw^{n*}\cfw^n$ is given by (see \cite[Ex. 5.3, pg. 93]{schmudgen})
\begin{align*}
E_{1,n}(\sigma)g=\chi_{\varDelta_{\sigma,n}} g,\quad g\in L^2(\mu),\ \sigma\in\borel{\cbb},
\end{align*}
with $\varDelta_{\sigma,n}=(\hsf_{{\phi^n, w_n}})^{-1}(\sigma)$. Clearly, \eqref{rzaska01} is satisfied for such $E_{1,n}$'s. Thus, $T$ is spectrally half-centered.
\end{proof}
\begin{rem}
In view of \eqref{rzaska02}, the set $\{\cfw^{n*}\cfw^n\colon n\in \nbb\}$ consists of operators that commute pointwise on $\dzn{\cfw}$. This means that {\em wco}'s  are, in a sense, {\em pointwise half-centered}. If $\cfw\in \ogr{L^2(\mu)}$, then $\dzn{\cfw}=L^2(\mu)$ and thus $\cfw$ is half-centered in a usual sense.

It is worth recalling that unbounded multiplication operators does not commute in general. For given weight functions $u, v\colon X\to\cbb$, $M_uM_v=M_vM_u$ if and only if $\dz{M_uM_v}=\dz{M_vM_u}$. Since $\dz{M_uM_v}=L^2\big((1+|v|^2+|uv|^2)\D \mu\big)$ and $\dz{M_vM_u}=L^2\big((1+|u|^2+|uv|^2)\D \mu\big)$, using \cite[Lemma 12.3]{2014-ampa-budzynski-jablonski-jung-stochel}, we see that $M_uM_v=M_vM_u$ if and only if there exist positive constants $c_1, c_2$ such that
\begin{align*}
c_1\big(1+|v|^2+|uv|^2\big)\leqslant 1+|u|^2+|uv|^2 \leqslant c_2 \big(1+|v|^2+|uv|^2\big) \quad \text{a.e. $[\mu]$}.
\end{align*}
The above fact is surely of folklore type.
\end{rem}
Giselsson proved in \cite[Proposition 2.1]{2018-oam-giselsson} that a half-centered operator $T\in\ogr{\hh}$ is centered if and only if $\jd{T^*}$ is invariant for every $T^{*n}T^n$, $n\in\nbb$. Since each $T^{*n}T^n$ is selfadjoint, $\jd{T^*}$ is invariant if and only if it is reducing, i.e., $\jd{T^*}^\perp$ is invariant for $T^{*n}T^n$ as well. We may rephrase the essential part of the Giselsson's criterion using language of commutativity: a half-centered operator $T\in\ogr{\hh}$ satisfying $PT^{*n}T^n=T^{*n}T^nP$ for every $n\in\nbb$, where $P$ is the orthogonal projection onto $\jd{T^*}$ is centered. This motivates Theorem \ref{cieczka01} below (its connection to centered {\em wco}'s  will be revealed later in the paper). For our convenience we introduce some notation: $\pfw$ stands for the operator on $L^2(\mu)$ given by 
\begin{align*}
\pfw f=w\efw(f_w),\quad f\in L^2(\mu).
\end{align*}
It is known (see \cite[Lemma 4.2]{2020-mn-benhida-budzynski-trepkowski}) that $\pfw$ is an orthogonal projection.

\begin{thm}\label{cieczka01}
Assume the following condition:
\begin{itemize}
\item[(i)] for every $n\in\nbb$, $\cfw^n$ is densely defined and closed.
\end{itemize}
Then the following conditions are equivalent:
\begin{itemize}
\item[(ii)] for every $n\in\nbb$, $\pfw M_{\hsf_{\phi^n,w_n}}\subseteq M_{\hsf_{\phi^n,w_n}} \pfw$,
\item[(iii)] for every $n\in\nbb$, $\efw\big(\hsf_{\phi^n,w_n}\big)= \hsf_{\phi^n,w_n}$ a.e. $[\mu_w]$. 
\end{itemize}
\end{thm}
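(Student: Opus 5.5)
The plan is to translate both (ii) and (iii) into statements about $\efw$ acting on functions in $L^2(\mu_w)$, using the identification $f\mapsto f_w:=f/w$ (on $\{w\neq 0\}$) between $L^2(\mu)$-functions supported on $\{w\neq0\}$ and $L^2(\mu_w)$. Then $\pfw f=w\,\efw(f_w)$, and $\pfw f=0$ whenever $f$ vanishes on $\{w\neq0\}$. Write $h:=\hsf_{\phi^n,w_n}$ for a fixed $n$.

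First I will record what (i) provides. By (i) and \cite[Theorem 47, Lemma 44]{2018-lnim-budzynski-jablonski-jung-stochel} (as in the proof of Proposition \ref{tolo1}), $\cfw^n=C_{\phi^n,w_n}$ is densely defined. Hence $h<\infty$ a.e. $[\mu]$, and therefore a.e. $[\mu_w]$ since $\mu_w\ll\mu$. Also $\cfw$ is densely defined, so $\efw$ exists and is an orthogonal projection on $L^2(\mu_w)$. Moreover, $\efw(g)=g$ holds if and only if $g$ is $\phi^{-1}(\ascr)$-measurable modulo $\mu_w$, and the pull-out property holds for such functions.

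For (iii)$\Rightarrow$(ii), assume that $h$ is $\phi^{-1}(\ascr)$-measurable mod $\mu_w$. Take $f\in\dz{M_h}$ and put $g=f_w\in L^2(\mu_w)$. I first show that $\pfw f\in\dz{M_h}$. Indeed, by the Jensen-type inequality $|\efw(g)|^2\le\efw(|g|^2)$ and the pull-out of $h^2$,
\begin{align*}
\int |h\, w\,\efw(g)|^2\D\mu=\int h^2|\efw(g)|^2\D\mu_w\le\int \efw(h^2|g|^2)\D\mu_w=\int h^2|g|^2\D\mu_w\le\|hf\|^2 .
\end{align*}
Applying the pull-out property to $\efw(hg)$, which is legitimate since $hg\in L^2(\mu_w)$, gives $\pfw(hf)=w\,\efw(hg)=h\,w\,\efw(g)=h\,\pfw f$. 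This is (ii).

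For (ii)$\Rightarrow$(iii), the main obstacle is that $h$ may be unbounded and that indicator test functions need not lie in $\dz{M_h}$. I will get around this by damping. Let $\Omega\in\phi^{-1}(\ascr)$ satisfy $\mu_w(\Omega)<\infty$; such sets exhaust $X$ because $\mu_w|_{\phi^{-1}(\ascr)}$ is $\sigma$-finite, as $\hfw<\infty$ a.e. Set $u=1/(1+h)\in(0,1]$ and $f=w\chi_\Omega u$. Then $f\in L^2(\mu)$, and $hf=w\chi_\Omega(1-u)\in L^2(\mu)$, so $f\in\dz{M_h}$. Condition (ii) yields $\efw(\chi_\Omega(1-u))=h\,\efw(\chi_\Omega u)$ a.e. $[\mu_w]$. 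Pulling out $\chi_\Omega$ gives
\begin{align*}
\chi_\Omega\big(1-\efw(u)\big)=\chi_\Omega\, h\,\efw(u),
\end{align*}
that is, $\efw(u)=1/(1+h)=u$ on $\Omega$. Exhausting $X$ by such $\Omega$ shows that $u$, and hence $h=1/u-1$, is $\phi^{-1}(\ascr)$-measurable mod $\mu_w$. Therefore $\efw(h)=h$ a.e. $[\mu_w]$, which is (iii). Since $n$ was arbitrary, this completes the plan.
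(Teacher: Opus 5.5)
Your proof is correct. The direction (iii)$\Rightarrow$(ii) is essentially the paper's argument: a Jensen-type inequality for $\efw$ plus the pull-out property. In passing, you correctly use $\dz{M_h}=\{f\colon hf\in L^2(\mu)\}$ and estimate with $h^2$. The paper instead writes $\dz{M_h}=L^2\big((1+h)\D\mu\big)$ and estimates with $h$; this is harmless, since $h^2$ is $\phi^{-1}(\ascr)$-measurable as well.

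For (ii)$\Rightarrow$(iii) you take a genuinely different route. The paper's steps are:
\begin{enumerate}
\item reduce to $w\geqslant 0$ through the factorization $\cfw=M_sC_{\phi,|w|}$, checking that the reducing property passes from $\pfw$ to $\psf_{\phi,|w|}$;
\item read (ii) as $h\,\efw(f_w)=\efw(hf_w)$ for nonnegative $f\in\dz{M_h}$;
\item extend this by monotone convergence to $L^2_+(\mu)$ and then to all nonnegative measurable functions;
\item insert $f\equiv 1$.
\end{enumerate}

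You instead write test functions as $f=wg$, so that $f_w=g$ and the phase of $w$ never enters. You need only the single damped function $g=\chi_\Omega u$, with $h=\hsf_{\phi^n,w_n}$ and $u=1/(1+h)$. It lies in $\dz{M_h}$ automatically because $h/(1+h)\leqslant 1$. Condition (ii) then amounts to the algebraic identity $(1+h)\,\efw(\chi_\Omega u)=\chi_\Omega$ a.e. $[\mu_w]$, which forces $\efw(\chi_\Omega u)=\chi_\Omega u$. The $\sigma$-finiteness of $\mu_w$ on $\phi^{-1}(\ascr)$, which follows from $\hfw<\infty$ a.e. $[\mu]$, finishes the argument.

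Your route is shorter and avoids both the reduction step and all approximation arguments. It also shows that (i) enters only through $\hfw<\infty$ and $\hsf_{\phi^n,w_n}<\infty$ a.e. $[\mu]$. The paper's route yields along the way the commutation $h\,\efw(f)=\efw(hf)$ for all nonnegative measurable $f$. That statement is equivalent to the $\phi^{-1}(\ascr)$-measurability of $h$ that you obtain directly.
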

\begin{proof}
(ii)$\Rightarrow$(iii) We first reduce the proof to the case $w\geqslant 0$ a.e. $[\mu]$. For this, we use \cite[Proposition 115]{2018-lnim-budzynski-jablonski-jung-stochel}. It implies that $C_{\phi, |w|}$ is well defined and $\cfw=M_sC_{\phi, |w|}$, where
\begin{align*}
    s(x)=\left\{ \begin{array}{cc}
         1&  \text{if } w(x)=0,\\
         \frac{w(x)}{|w(x)|}& \text{if } w(x)\neq0. 
    \end{array}\right.
\end{align*}
Moreover, $\mu_w=\mu_{|w|}$, $\hfw=\hsf_{\phi,|w|}$, and $\efw =\esf_{\phi, |w|}$. Analogous formulas may be written with $\phi^n$ and $|w_n|$ (and appropriately defined $s_n$), for every $n\in\nbb$. Note that $M_{s_n}\in\ogr{L^2(\mu)}$ is bounded from below. 

Condition (i) implies that $\dzn{\cfw}$ is dense in $L^2(\mu)$ and $\cfw^n=C_{\phi^n, w_n}$ for every $n\in\nbb$. Therefore, $C_{\phi^n, |w_n|}$ is densely defined for every $n\in\nbb$ and $\overline{\dzn{C_{\phi^n, |w_n|}}}=L^2(\mu)$. Also, $\cfw^{n*}\cfw^n=M_{\hsf_{\phi^n,w_n}}=M_{\hsf_{\phi^n,|w_n|}}$, $n\in\nbb$. 

Observe that $\jd{\pfw}=\jd{|\cfw^*|}=\jd{\cfw^*}$. Indeed, if $f\in \jd{\pfw}$, then $f\in L^2(\mu)$ and $w\efw(f_f)=0$ a.e. $[\mu]$. The latter implies that $w\sqrt{\hfw\circ\phi} \efw(f_w)=0$ a.e. $[\mu]$. Since, in view of \cite[Theorem 18]{2018-lnim-budzynski-jablonski-jung-stochel}, $\dz{|\cfw^*|}=\{f\in L^2(\mu)\colon w\sqrt{\hfw\circ\phi}\, \efw(f_w)\in L^2(\mu)\}$, we deduce that $f\in\dz{\cfw^*}$ and $|\cfw^*|f=0$. Hence, $\jd{\pfw}\subseteq\jd{|\cfw^*|}$. The opposite inclusion can be proved essentially the same way.

Set $\ee_{\phi, |w|}=\jd{\psf_{\phi,|w|}}^\perp$ and $\ee_{\phi, w}=\jd{\psf_{\phi,w}}^\perp$. Note that
\begin{align*}
\ee_{\phi, w}^\perp=\jd{\cfw^*}=\jd{(M_s C_{\phi, |w|})^*}=\jd{C_{\phi, |w|}^*M_{\bar s}}.
\end{align*}
Since $M_{\bar s}$ is injective, we deduce that
\begin{align}\label{pioro01}
\ee_{\phi, |w|}^\perp =M_{\bar s} \ee_{\phi, w}^\perp\text{ and } \ee_{\phi, |w|}= M_{s^{-1}}\ee_{\phi, w}.
\end{align}

We now show that $\ee_{\phi, |w|}$ is reducing for all $M_{\hsf_{\phi^n,|w_n|}}$'s. Condition (ii) implies that $\ee_{\phi, w}$, and $\ee^\perp$ are invariant for all $M_{\hsf_{\phi^n,w_n}}$'s. Let $\hsf$ denote any of the $\hsf_{\phi^n, w_n}$, $n\in\nbb$. Let $f\in \ee_{\phi, |w|}\cap\dz{M_\hsf}$. Then, in view of \eqref{pioro01},  $M_{s_n}f\in \ee_{\phi, w}\cap \dz{M_\hsf}$ (use also $|s_n|=1$). Thus, by (ii), $\hsf M_sf=s\hsf f\in \ee_{\phi, w}$. Consequently, by \eqref{pioro01}, we have
\begin{align*}
\hsf_{\phi^n, |w_n|} f=\hsf f =\frac{1}{s}\big(s\hsf f\big)\in M_{s^{-1}}\ee_{\phi, w}=\ee_{\phi, |w|}.
\end{align*}
This proves that $\ee_{\phi, |w|}$ is invariant for $M_\hsf$. In a similar fashion we show that $\ee_{\phi, |w|}^\perp$ is invariant for $M_\hsf$ as well. Since
\begin{align*}
\psf_{\phi, |w|}f=|w|\esf_{\phi, |w|}\big(f_{|w|})=\bar s w \efw \big(\tfrac{1}{\bar s}f_w\big) = M_{\bar s}\psf_{\phi, w} M_{\bar s^{-1}} f,\quad f\in L^2(\mu),
\end{align*}
we may show that for every $f\in \dz{M_h}$, $P_{\phi, |w|}f\in \dz{M_\hsf}$. This means that $\ee_{\phi, |w|}$ is reducing for $M_\hsf$, and thus $\ee_{\phi, |w|}$ is reducing for all $M_{\hsf_{\phi^n, |w_n|}}$'s. In another words, $\psf_{\phi, |w|}M_{\hsf_{\phi^n, |w_n|}}\subseteq M_{\hsf_{\phi^n, |w_n|}} P_{\phi, |w|}$ for every $n\in\nbb$. Summing up, (i) and (ii) are satisfied with $\phi$ and $|w|$ in place of $\phi$ and $w$. Note also, that (iii) is satisfied with $\phi$ and $w$ if and only if it is satisfied with $\phi$ and $|w|$. Therefore, without loosing generality, we may assume that $w\geqslant0$.

Condition (ii) implies that
\begin{align*}
w \hsf \efw (f_w)= w\efw(\hsf f_w) \text{ a.e. $[\mu]$}, f\in \dz{M_\hsf}. 
\end{align*}
(Again, $\hsf$ denotes any of the Radon-Nikodym derivatives $\hsf_{\phi^n, w_n}$, $n\in\nbb$). Let $L^2_+(\mu)=\{f\in L^2(\mu) \colon f\geqslant 0\}$ and ${\EuScript D}_+(M_\hsf)= L^2_+(\mu)\cap \dz{M_\hsf}$. Clearly, ${\EuScript D}_+(M_\hsf)$ is dense in $L^2_+(\mu)$. Also, we see that
\begin{align}\label{pioro03}
w \hsf \efw (f_w)= w\efw(\hsf f_w) \text{ a.e. $[\mu]$}, f\in {\EuScript D}_+(M_\hsf),
\end{align}
and all the functions appearing in \eqref{pioro03} are $\rbb_+$-valued, which enables us to use the monotone convergence theorem. Let $f\in L^2_+(\mu)$. There exists a sequence $\{f_k\}\subseteq {\EuScript D}_+(M_\hsf)$ such that $f_k\nearrow f$ as $k\to +\infty$. Thus  $w \efw \big((\hsf f_k)_w\big)\nearrow w \efw \big((\hsf f)_w\big)$ and $w\hsf \efw \big((f_k)_w\big)\nearrow w \hsf\efw \big((f_k)_w\big)$ as $k\to+\infty$ (use \cite[(A.6) in Appendix A]{2018-lnim-budzynski-jablonski-jung-stochel}). This implies that the equality in \eqref{pioro03} is satisfied for any $f\in L^2_+(\mu)$. Using $\sigma$-finiteness of $\mu$ and standard measure theoretic arguments we deduce that 
\begin{align*}
\hsf \efw (f)= \efw(\hsf f) \text{ a.e. $[\mu_w]$}
\end{align*}
holds for every $\ascr$-measurable $f\colon X\to\rbb_+$. Clearly, this implies (iii).

(iii)$\Rightarrow$(ii) Fix $n\in\nbb$ and denote $\hsf=\hsf_{\phi^n,w_n}$. We first show that $\psf_{\phi,w}\dz{M_{\hsf}}\subseteq \dz{M_{\hsf}}$. Take $f\in \dz{M_\hsf}=L^2\big((1+\hsf)\D\mu\big)$. Clearly, $\psf_{\phi, w}f\in L^2(\mu)$. Since $\big|\efw(f_w)\big|^2\leqslant \efw\big(|f_w|^2\big)$ a.e. $[\mu_w]$ (see \cite[Theorem A.4]{2018-lnim-budzynski-jablonski-jung-stochel}) and $f\in L^2(\hsf\D\mu)$, we get by (iii):
\begin{align*}
\int_X |\psf_{\phi, w} f|^2 \hsf\D\mu&= \int_X |w|^2 \big|\efw (f_w)\big|^2 \hsf\D\mu\leqslant \int_X |w|^2 \efw \big(|f_w|^2\big) \hsf\D\mu\\
&= \int_X \efw \big(\hsf|f_w|^2\big)\D\mu_w= \int_X \hsf|f_w|^2\D\mu_w\leqslant \int_X \hsf|f|^2\D\mu<+\infty.
\end{align*}
Thus $\pfw f\in L^2\big((1+\hsf)\D\mu\big)=\dz{M_\hsf}$. Consequently, $\psf_{\phi,w}\dz{M_{\hsf}}\subseteq \dz{M_{\hsf}}$. That the equality $\pfw M_\hsf f=M_\hsf \pfw f$ is valid for all $f\in\dz{M_\hsf}$ follows immediately from \cite[(A.13) in Appendix A]{2018-lnim-budzynski-jablonski-jung-stochel} and the fact that $f_w$ and $\hsf f_w$ belong to $L^2(\mu_w)$. This completes the proof.
\end{proof}
\begin{rem}
Half-centered operators and centered operators are related to weakly centered operators. Spectrally weakly-centered {\em wco}'s  (unbounded!) have been characterized in \cite{2025-arxiv-budzynski} via the following condition
\begin{align*}
\hfw =\efw (\hfw)\text{ a.e. $[\mu]$}.
\end{align*}
In view of Theorem \ref{cieczka01}, any operator satisfying conditions (i) and (ii) is spectrally weakly-centered. Inspecting the proof one can deduce that conditions:
\begin{enumerate}
\item[(i)$_1$] $\cfw$ is densely defined,
\item[(ii)$_1$] $\pfw M_{\hfw} \subseteq M_{\hfw} \pfw$,
\end{enumerate}
are sufficient (and necessary) for $\cfw$ to be spectrally weakly centered.
\end{rem}
The above considerations raise an interesting problem of whether unbounded centered operators can be defined in a meaningful way and, if so, whether unbounded centered {\em wco}'s  can be characterized via condition (iii) of Theorem \ref{cieczka01}. %We will address this problem in the last part of the paper. 
Below we show that the above holds true for bounded {\em wco}'s .

We are ready to present the main result of the paper.
\begin{thm}\label{dan01}
Assume $\cfw\in\ogr{L^2(\mu)}$. The following conditions are equivalent:
\begin{enumerate}
\item[(A)] $\cfw$ is centered,
\item[(B)] for every $n\in\nbb$, $C_{\phi^n, w_{n,\phi}}=C_{\phi^n,w_{\phi, n}}$, with $ w_{n,\phi}$ and $ w_{\phi, n}$ defined in \eqref{djo01} and \eqref{djo02},
\item[(C)] for every $n\in\nbb$, $\hfw\circ\phi \cdot \hfw\circ\phi^{2}\cdots\hfw\circ\phi^{n}=\hsf_{\phi^n,w_n}\circ\phi^n$ a.e. $[\mu_{w_n}]$,
\item[(D)] for every $n\in\nbb$, $\hsf_{\phi^n, w_n}=\efw(\hsf_{\phi^n, w_n})$ a.e. $[\mu_w]$,
\item[(E)] for every $n\in\nbb$, $\hsf_{\phi^{n+1},w_{n+1}}\circ \phi=\hsf_{\phi^n,w_n}\cdot \hfw\circ\phi$ a.e. $[\mu_w]$,
\item[(F)] for every $n\in\nbb$, $\hsf_{\phi, w}=\esf_{\phi^n, w_n}\big(\hsf_{\phi,w}\big)$ a.e. $[\mu_{ w_n}]$,
\item[(G)] for every $n\in\nbb$, $\esf_{\phi^n, w_n}\big(\hsf_{\phi, w}\big)\circ \phi =\esf_{\phi^{n+1}, w_{n+1}}\big(\hsf_{\phi,w}\circ\phi \big)$ a.e. $[\mu_{ w_{n+1}}]$,
\item[(H)] for every $n,k\in\nbb$, $\hsf_{\phi^n, w_n}=\esf_{\phi^k, w_k}\big(\hsf_{\phi^n,w_n}\big)$ a.e. $[\mu_{w_k}]$,
\end{enumerate}
\end{thm}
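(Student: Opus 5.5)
The plan is to route everything through condition (D), because (A)$\Leftrightarrow$(D) is essentially Giselsson's criterion combined with Theorem \ref{cieczka01}.

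Since $\cfw\in\ogr{L^2(\mu)}$, all powers are bounded, so hypothesis (i) of Theorem \ref{cieczka01} holds and $\cfw^n=C_{\phi^n,w_n}$. By Proposition \ref{tolo1}, $\cfw$ is half-centered and $\cfw^{n*}\cfw^n=M_{\hsf_n}$. Giselsson's \cite[Proposition 2.1]{2018-oam-giselsson} says $\cfw$ is centered iff $\jd{\cfw^*}$ reduces every $M_{\hsf_n}$. In the proof of Theorem \ref{cieczka01} it was shown that $\jd{\cfw^*}=\jd{\pfw}$, so this reducing condition is exactly (ii) of Theorem \ref{cieczka01}. That theorem then gives (A)$\Leftrightarrow$(D).

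Next I would prove (D)$\Leftrightarrow$(E) by induction-free use of \eqref{avici01}. By Lemma \ref{lucek3},
\[
\hsf_{n+1}\circ\phi=\efw(\hsf_n)\cdot\hfw\circ\phi \quad\text{a.e. }[\mu_w].
\]
If (D) holds, this is (E). Conversely, (E) gives $\efw(\hsf_n)\,\hfw\circ\phi=\hsf_n\,\hfw\circ\phi$ a.e. $[\mu_w]$. Since $\hfw\circ\phi>0$ a.e. $[\mu_w]$ (because $\mu_w(\phi^{-1}\{\hfw=0\})=\int_{\{\hfw=0\}}\hfw\,\D\mu=0$), we may cancel and obtain (D).

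Iterating (E) gives (C): $\hsf_n\circ\phi^n$ telescopes as the product of $\hfw\circ\phi^j$ for $j=1,\dots,n$. Here one must check that composing an a.e. $[\mu_w]$ identity with $\phi^{k}$ yields an a.e. $[\mu_{w_{k+1}}]$ identity, which follows from absolute continuity $\mu_{w_{k+1}}\circ\phi^{-k}\ll\mu_w$. For the converse, (C) at levels $n+1$ and $n$ gives $\hsf_{n+1}\circ\phi^{n+1}=\hsf_n\circ\phi^n\cdot\hfw\circ\phi^{n+1}$ a.e. $[\mu_{w_{n+1}}]$, i.e. $(\hsf_{n+1}\circ\phi-\hsf_n\cdot\hfw\circ\phi)\circ\phi^n=0$. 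Pulling back by $\phi^n$ requires care, since it only yields (E) a.e. with respect to the pushforward of $|w_n\cdot w\circ\phi^n|^2\D\mu$ under $\phi^n$. I would handle this by an induction on $n$ using Lemma \ref{lucek3}, formula \eqref{cap02}, in the form $\hsf_{n+1}=\esf_n(\hsf_1)\circ\phi^{-n}\cdot\hsf_n$. This relation also links (C) with (F) and (G): (C) together with \eqref{cap02} (with $k=1$) says $\esf_n(\hfw)=\hfw\circ\phi^n$-type identities, which I expect to be exactly (F) after rewriting $\esf_n(\hfw)\circ\phi^{-n}\circ\phi^n=\esf_n(\hfw)$ a.e. $[\mu_{w_n}]$. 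Then (G) is the statement that these conditional expectations are compatible along one step, derivable from (F) at $n$ and $n+1$ together with the tower property $\esf_{n+1}(\,\cdot\,\circ\phi)=\esf_n(\cdot)\circ\phi$. I expect this tower property to be provable from the Radon–Nikodym characterizations of $\esf_n$.

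Finally, (H) with $k=1$ is (D), and (H) with $n=1$ is (F). For the full (H), I would show $\hsf_n$ is $\phi^{-k}(\ascr)$-measurable modulo $\mu_{w_k}$ using (E) iterated: $\hsf_n=\hsf_{n+k}\circ\phi^k/(\hfw\circ\phi\cdots)$ expressed through compositions with $\phi^k$.

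The main obstacle is the bookkeeping of null sets across different measures $\mu_{w_n}$, and the tower-type identity for $\esf_n$ needed in (F)$\Leftrightarrow$(G) and (H). I would first establish a general lemma giving $\esf_{n+1}(g\circ\phi)=\esf_n(g)\circ\phi$ a.e. $[\mu_{w_{n+1}}]$, verified by testing against $\chi_{\phi^{-(n+1)}(\varDelta)}$ and using \eqref{cap02}.
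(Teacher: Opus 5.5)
Your proofs of (A)$\Leftrightarrow$(D) and (D)$\Leftrightarrow$(E) are the paper's arguments, and (E)$\Rightarrow$(C) is fine. Your route to (H) is correct and more direct than the paper's (A)$\Rightarrow$(H): iterate (E) to get $\hsf_{n+k}\circ\phi^k=\hsf_n\cdot\prod_{j=1}^k\hfw\circ\phi^j$ a.e.\ $[\mu_{w_k}]$, then use (C) to rewrite the product as $\hsf_k\circ\phi^k$. The concrete error is your ``general lemma'' $\esf_{n+1}(g\circ\phi)=\esf_n(g)\circ\phi$ a.e.\ $[\mu_{w_{n+1}}]$, which is false. Testing against $\chi_{\phi^{-(n+1)}(\varDelta)}$, with $|w_{n+1}|^2=|w|^2\cdot|w_n|^2\circ\phi$, gives $\int_{\phi^{-(n+1)}(\varDelta)}g\circ\phi\,\D\mu_{w_{n+1}}=\int_{\phi^{-n}(\varDelta)}g\,\hfw\,\D\mu_{w_n}$. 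So $\esf_{n+1}(g\circ\phi)$ is the conditional expectation of $g$ onto $\phi^{-n}(\ascr)$ with respect to the \emph{reweighted} measure $\hfw\,\D\mu_{w_n}$, composed with $\phi$. This agrees with $\esf_n(g)\circ\phi$ for all $g$ only under (F). For $g=\hfw$ your lemma is literally (G), which is equivalent to centeredness, so it cannot hold in general. A concrete counterexample: take $X=\{0,1,2,3\}$ with counting measure, $\phi(0)=\phi(1)=\phi(2)=0$, $\phi(3)=1$, $w=\chi_{\{1,2,3\}}$. Then $\esf_{\phi^2,w_2}(\hfw\circ\phi)(3)=1$, while $\efw(\hfw)(\phi(3))=\tfrac12$, and $\mu_{w_2}(\{3\})=1$.

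Consequently you have no argument for (G)$\Rightarrow$(F), and that is where work is needed. Put $E_n=\esf_{\phi^n,w_n}(\hfw)$ and $\varOmega=\phi^{-n}(\varDelta)$. Testing (G) on $\phi^{-(n+1)}(\varDelta)$ gives $\int_\varOmega E_n\hfw\,\D\mu_{w_n}=\int_\varOmega\hfw^2\,\D\mu_{w_n}$. The projection property gives $\int_\varOmega E_n^2\,\D\mu_{w_n}=\int_\varOmega E_n\hfw\,\D\mu_{w_n}$. Together these force $\int_\varOmega|\hfw-E_n|^2\,\D\mu_{w_n}=0$. The converse (F)$\Rightarrow$(G) needs no tower property: compose (F) with $\phi$, using that $\mu_{w_{n+1}}\circ\phi^{-1}$ has density $\hfw$ with respect to $\mu_{w_n}$.

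Your chain of implications also fails to close elsewhere. (F) only appears as an output, via (H); nothing shows (F)$\Rightarrow$(D). Your hope that (C) plus \eqref{cap02} with $k=1$ ``is exactly (F)'' fails as stated. Formula \eqref{cap02} with $k=1$ yields $\hsf_{n+1}\circ\phi^n=\esf_{\phi^n,w_n}(\hfw)\cdot\hsf_n\circ\phi^n$ a.e.\ $[\mu_{w_n}]$, i.e.\ it controls $\hsf_{n+1}\circ\phi^n$, whereas (C) only involves $\hsf_{n+1}\circ\phi^{n+1}$. A genuine induction on $n$ is needed, as in the paper.

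Condition (B) is never addressed. You need the phase formulas $U_n=C_{\phi^n,w_{n,\phi}}$ and $U_1^n=C_{\phi^n,w_{\phi,n}}$, together with the Morrel--Muhly/Ito--Yamazaki--Yanagida characterization ($T$ is centered iff $U_1^k$ is the phase of $T^k$ for all $k$). Alternatively, prove at least (B)$\Leftrightarrow$(C) by evaluating both operators on indicator functions.

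Finally, in (C)$\Rightarrow$(E) you correctly spot the null-set problem, but \eqref{cap02} does not resolve it. What must be shown is $\efw(\hsf_n)=0$ a.e.\ $[\mu_w]$ on $\{\hsf_n=0\}$. One fix runs as follows. Compose \eqref{avici01} with $\phi^n$ and use (C) at levels $n$ and $n+1$; since $\mu_{w_{n+1}}\circ\phi^{-n}$ has density $\hsf_n$ with respect to $\mu_w$, this gives $\efw(\hsf_n)=\hsf_n$ a.e.\ $[\mu_w]$ on $\{\hsf_n>0\}$. Writing $G=\efw(\hsf_n)$, this says $\hsf_n=G\chi_{\{\hsf_n>0\}}$, hence $G=G\,\efw(\chi_{\{\hsf_n>0\}})$. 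That forces $G=0$ a.e.\ on $\{\hsf_n=0\}$, which yields (D).
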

\begin{proof}
(A) $\Leftrightarrow$ (B) Fix $n\in\nbb$. In view of \cite[Theorem 18 and Lemma 44]{2018-lnim-budzynski-jablonski-jung-stochel}, $\cfw^n=C_{\phi^n, {w}_n}$. Thus, by \cite[Theorem 18]{2018-lnim-budzynski-jablonski-jung-stochel}, the phase $U_n$ in the polar decomposition $\cfw^n=U_n|\cfw^n|$ is given by
\begin{align}\label{wallace01}
U_n=C_{\phi^n,{w}_{n,\phi}}.
\end{align}
On the other hand, again by \cite[Theorem 18 and Lemma 26]{2018-lnim-budzynski-jablonski-jung-stochel}, we have
\begin{align}\label{wallace02}
U_1^n=C_{\phi,w_{1,\phi}}^n=C_{\phi^n,w_{\phi,n}}.
\end{align}
Since an operator $T\in\ogr{\hh}$ is centered if and only if $V_1^k=V_k$ for every $k\in\nbb$, where $V_k$ is the partial isometry part of the polar decomposition $T^k=V_k|T^k|$ (see \cite[Theorem 1]{1974-sm-morrel-muhly} and \cite[Theorem 3.2]{2004-ieot-ito-yamazaki-yanagida}), using \eqref{wallace01} and \eqref{wallace02} we get the claim.

(B) $\Leftrightarrow$ (C) Since characteristic functions of finite $\mu$ measure sets are dense in $L^2(\mu)$, (ii) holds if and only if 
\begin{align*}
    \chi_{\phi^{-n}(\varDelta)} w_n \sqrt{\hsf_{n}\circ\phi^n}=\chi_{\phi^{-n}(\varDelta)} w_n\sqrt{\hsf_1\circ\phi\cdots \hsf_1\circ\phi^n}\quad \text{a.e. }[\mu],\ \varDelta\in\ascr,\quad n\in\nbb,
\end{align*}
or equivalently,
\begin{align}\label{stachu3}
    \chi_{\phi^{-n}(\varDelta)} {\hsf_{n}\circ\phi^n}=\chi_{\phi^{-n}(\varDelta)}{\hsf_1\circ\phi\cdots \hsf_1\circ\phi^n}\quad \text{a.e. }[\mu_{ w_n}],\ \varDelta\in\ascr,\quad n\in\nbb.
\end{align}
Clearly, the above is satisfied if and only if so is \eqref{stachu3}. Hence (ii) and (iii) are equivalent.

(A) $\Leftrightarrow$ (D) By Proposition \ref{tolo1}, $\cfw$ is half-centered. Thus, in view of \cite[Proposition 2.1]{2018-oam-giselsson}, $\cfw$ is centered if and only if the following inclusion holds
\begin{align}\label{riot1}
    \cfw^{k*}\cfw^k \jd{\cfw^*}\subseteq \jd{\cfw^*},\quad k\in \nbb.
\end{align}
Since $\cfw\in\ogr{L^2(\mu)}$, $\cfw^{n*}\cfw^n=M_{\hsf_{n}}$ for all $n\in\nbb$. This and $\jd{\cfw^*}^\perp=\pfw(L^2(\mu))$ implies that \eqref{riot1} is equivalent to 
\begin{align*}
    \pfw M_{\hsf_{n}}= M_{\hsf_{n}} \pfw,\quad n\in \nbb.
\end{align*}
It suffices now to apply Theorem \ref{cieczka01}.

(D) $\Leftrightarrow$ (E) This follows immediately from Lemma \ref{lucek3} and the fact that $\hsf_1\circ\phi>0$ a.e. $[\mu_w]$ (see \cite[Lemma 6]{2018-lnim-budzynski-jablonski-jung-stochel}).

(D) $\Rightarrow$ (F) By Lemma \ref{lucek3}, we get
\begin{align*}%\label{adhd4}
\hsf_{{1+1}}\circ\phi = \hsf_{{1}}\hsf_1\circ\phi \quad \text{a.e. }[\mu_w].
\end{align*}
Since $\esf_1\big(\hsf_{{1+1}}\big)=\hsf_{{1+1}}$ a.e. $[\mu_w]$ and $\esf_1\big(\hsf_{1}\big)=\hsf_{1}$ a.e. $[\mu_w]$, we deduce that $\esf_{2}\big(\hsf_{{1}}\big)=\hsf_{{1}}$ a.e. $[\mu_{w_2}]$. Using Lemma \ref{lucek3} again, we obtain
\begin{align*}%\label{adhd41}
\hsf_{{1+2}}\circ\phi^2 = \hsf_{{1}}\hsf_{2}\circ\phi^2 \quad \text{a.e. }[\mu_{w_2}].
\end{align*}
Since $\esf_1\big(\hsf_{3}\big)=\hsf_{3}$ a.e. $[\mu_w]$ and $\esf_1\big(\hsf_{2}\big)=\hsf_{2}$ a.e. $[\mu_w]$, we get $\esf_{3}\big(\hsf_{1}\big)=\hsf_{1}$ a.e. $[\mu_{w_3}]$. Repeating the argument we deduce (F).

(F) $\Rightarrow$ (D) For $n\in\nbb$, consider the following conditions
\begin{enumerate}
\item[(D$_n$)] for every $j\in J_n$, $\hsf_{\phi^j, w_j}=\esf_{\phi, w} \big(\hsf_{\phi^j, w_j}\big)$ a.e. $[\mu_{w}]$,
\item[(F$_n$)] for every $j\in J_n$, $\hsf_{\phi, w}=\esf_{\phi^j, w_j}\big(\hsf_{\phi,w}\big)$ a.e.$[\mu_{ w_j}]$
\end{enumerate}
Clearly (D$_1$) and (F$_1$) are equivalent (D$_{k+1}$) implies (D$_k$), and (F$_{k+1}$) implies (F$_k$). Hence we can use induction. Fix $k\in J_{n-1}$. Assume that condition (F$_{k+1}$) is satisfied and that (F$_k$) implies (D$_k$). Using Lemma \ref{lucek3} and induction hypothesis we get
\begin{align}\label{govinda01}
|w|^2\cdot\hsf_{k+1}=|w|^2\cdot \esf_{k}(\hsf_1)\circ\phi^{-k}\cdot \esf_1(\hsf_k)\text{ a.e. $[\mu]$}.
\end{align}
We also have
\begin{align}\label{govinda02}
|w_k|^2\cdot\hsf_1=|w_k|^2\cdot \big(H_{k+1}\circ\phi^k\big)\text{ a.e. $[\mu]$},
\end{align}
with $\chi_{\{\hsf_k>0\}}H_{k+1}=H_{k+1}=\esf_{k}(\hsf_1)\circ \phi^{-k}$ a.e. $[\mu]$, and
\begin{align}\label{govinda03}
|w_{k+1}|^2\cdot \hsf_1=|w_{k+1}|^2\cdot \big(G_{k+1}\circ\phi^{k+1}\big)\text{ a.e. $[\mu]$}.
\end{align}
with some $\ascr$-measurable function $G\colon X\to\rbb_+$ (this follows from (F$_{k+1}$)). Comparing \eqref{govinda02} and \eqref{govinda03}, we get from \cite[Lemma 5]{2018-lnim-budzynski-jablonski-jung-stochel}
\begin{align}\label{govinda04}
|w|^2\cdot\chi_{\{\hsf_{k}>0\}}\cdot \big(G_{k+1}\circ\phi\big)=|w|^2\cdot \chi_{\{\hsf_{k}>0\}}\cdot H_{k+1}= |w|^2\cdot H_{k+1}\text{ a.e. $[\mu]$}.
\end{align}
Since (D$_k$) is satisfied, $\{\hsf_k\}\cap\{w\neq0\}\in \phi^{-1}(\ascr)\cap\{w\neq 0\}$, which yields $|w|^2\cdot \chi_{\{\hsf_k>0\}}=|w|^2\cdot \big(K_{k+1}\circ\phi\big)$ a.e. $[\mu]$, where $K_{k+1}=\chi_{\varDelta_{k+1}}$ with some $\varDelta\in\ascr$. Plugging this into \eqref{govinda04} we get
\begin{align*}
|w|^2\cdot H_{k+1}= |w|^2\cdot \big(K_{k+1}\circ\phi\big) \cdot \big(G_{k+1}\circ\phi\big) \text{ a.e. $[\mu]$}.
\end{align*}
Using the above and \eqref{govinda01} we see that
\begin{align*}
|w|^2\cdot\hsf_{k+1}=|w|^2\cdot \big(K_{k+1}\circ\phi\big) \cdot \big(G_{k+1}\circ\phi\big)\cdot \esf_1(\hsf_k)\text{ a.e. $[\mu]$}.
\end{align*}
Clearly, this implies (D$_{k+1}$). Using induction we get (D).

(G) $\Rightarrow$ (F) Fix $n \in \nbb$ and let $E_n := \esf_{\phi^n, w_n}(\hsf_{\phi, w})$. We have
\begin{align*}
    E_n \circ \phi = \esf_{\phi^{n+1}, w_{n+1}}(\hsf_{\phi, w} \circ \phi) \quad \text{a.e. } [\mu_{w_{n+1}}].
\end{align*}
Since $\cfw\in\ogr{L^2(\mu)}$ is a bounded operator, $\hsf_{\phi, w} \in L^\infty(\mu)$, which ensures that $E_n \in L^\infty(\mu_{w_n})$. Let $\varDelta \in \ascr$ satisfy $\mu(\varDelta) < \infty$. Set $\varOmega = \phi^{-n}(\varDelta) \in \phi^{-n}(\ascr)$ and $\omega = \phi^{-1}(\varOmega) = \phi^{-(n+1)}(\varDelta)$. Since $\omega \in \phi^{-(n+1)}(\ascr)$, we have
\begin{align*}
    \int_\omega (E_n \circ \phi) \D\mu_{w_{n+1}} &= \int_\omega \esf_{\phi^{n+1}, w_{n+1}}(\hsf_{\phi, w} \circ \phi) \D\mu_{w_{n+1}} = \int_\omega (\hsf_{\phi, w} \circ \phi) \D\mu_{w_{n+1}}.
\end{align*}
The left-hand side of the above is
\begin{align*}
    \int_\omega (E_n \circ \phi) \D\mu_{w_{n+1}} 
    = \int_{\phi^{-1}(\varOmega)} \big(E_n |w_n|^2\big) \circ \phi \D\mu_w
    = \int_\varOmega E_n\,\hsf_{\phi, w} \D\mu_{w_n} .
\end{align*}
The right-hand side is
\begin{align*}
    \int_\omega (\hsf_{\phi, w} \circ \phi) \D\mu_{w_{n+1}} = \int_{\phi^{-1}(\varOmega)} \big(\hsf_{\phi, w} |w_n|^2\big) \circ \phi \D\mu_w 
     = \int_\varOmega \hsf_{\phi, w}^2 \D\mu_{w_n}.
\end{align*}
Hence,
\begin{align*}
    \int_\varOmega E_n \hsf_{\phi, w} \D\mu_{w_n} = \int_\varOmega \hsf_{\phi, w}^2 \D\mu_{w_n}.
\end{align*}
Furthermore, because $E_n \chi_\varOmega$ is $\phi^{-n}(\ascr)$-measurable, $\mu_{w_n}(\varOmega)<\infty$, and $\esf_{\phi^{n}, w_n}$ is an orthogonal projection, we have
\begin{align*}
    \int_\varOmega E_n^2 \D\mu_{w_n} = \int_\varOmega E_n \esf_{\phi^n, w_n}(\hsf_{\phi, w}) \D\mu_{w_n} = \int_\varOmega E_n \hsf_{\phi, w} \D\mu_{w_n} = \int_\varOmega \hsf_{\phi, w}^2 \D\mu_{w_n}.
\end{align*}
Evaluating the $L^2(\mu_{w_n})$ norm we get
\begin{align*}
    \int_\varOmega |\hsf_{\phi, w} - E_n|^2 \D\mu_{w_n} &= \int_\varOmega \hsf_{\phi, w}^2 \D\mu_{w_n} - 2 \int_\varOmega E_n \hsf_{\phi, w} \D\mu_{w_n} + \int_\varOmega E_n^2 \D\mu_{w_n} \\
    &= \int_\varOmega \hsf_{\phi, w}^2 \D\mu_{w_n} - 2 \int_\varOmega \hsf_{\phi, w}^2 \D\mu_{w_n} + \int_\varOmega \hsf_{\phi, w}^2 \D\mu_{w_n} = 0.
\end{align*}
This implies $\esf_{\phi^n, w_n}(\hsf_{\phi, w}) = \hsf_{\phi, w}$ a.e. $[\mu_{w_n}]$ on $\varOmega$. Using $\sigma$-finiteness of $\mu$, we deduce that $\esf_{\phi^n, w_n}(\hsf_{\phi, w}) = \hsf_{\phi, w}$ a.e. $[\mu_{w_n}]$.

(F) $\Rightarrow$ (G) Use \cite[Lemma 5]{2018-lnim-budzynski-jablonski-jung-stochel}.

(H) $\Rightarrow$ (D) Obvious.

(A) $\Rightarrow$ (H) Fix $n\in\nbb$. Clearly, $\cfw^n=C_{\phi^n, w_n}$ is centered. Therefore, using (F), we get that for every $k\in\nbb$, $\hsf_n=\esf_{{nk}}(\hsf_n)$ a.e. $[\mu_{w_{nk}}]$ (use also $(\phi^n)^k=\phi^{nk}$ and $(w_n)_k=w_{nk}$). Since $\hsf_n=\esf_{{j}}(\hsf_n)$ a.e. $[\mu_{w_{j}}]$ implies $\hsf_n=\esf_{{i}}(\hsf_n)$ a.e. $[\mu_{w_{i}}]$ for $i\leqslant j$, we deduce (H).
\end{proof}

\begin{rem}
Condition (F) in Theorem \ref{dan01} admits a spectral interpretation similar to that of condition (D). Applying Theorem \ref{cieczka01} with $\cfw^n$ in place of $\cfw$ and $\hsf_{\phi, w}$ in place of $\hsf_{\phi^n, w_n}$, we see that (F) is equivalent to
\begin{align*}
    \psf_{\phi^n, w_n} M_{\hsf_{\phi, w}} \subseteq M_{\hsf_{\phi, w}} \psf_{\phi^n, w_n}, \quad n \in \nbb.
\end{align*}
Since $\cfw \in \ogr{L^2(\mu)}$, the multiplication operator $M_{\hsf_{\phi, w}}$ is bounded and self-adjoint. Consequently, the above inclusion is equivalent to the commutativity of the orthogonal projection $\psf_{\phi^n, w_n}$ and $M_{\hsf_{\phi, w}}$.
Finite versions of this equivalence will play a central role in the investigation of spectrally $n$-weakly centered unbounded weighted composition operators, which is a subject of a forthcoming paper.
\end{rem}
\begin{rem}
Quasinormal operators are centered, and centered operators are weakly centered. As already seen, the following conditions:
\begin{enumerate}
\item[(i)] $\esf_{\phi}(\hsf_{\phi})=\hsf_{\phi}$
\item[(ii)] $\esf_{\phi}(\hsf_{\phi^n})=\hsf_{\phi^n}$ for every $n\in \nbb$,
\end{enumerate}
are relevant when considering centered and weakly-centered composition operators. The following two conditions, characterizing together quasinormal composition operators (see \cite[Theorem 3.1]{2014-jmaa-budzynski-jablonski-jung-stochel}):
\begin{enumerate}
\item[(iii)] $\esf_{\phi}(\hsf_\phi)=\hsf_\phi\circ \phi$ ,
\item[(iv)] $\esf_{\phi}(\hsf_{\phi^n})=\esf_{\phi}(\hsf_\phi)^n$ for every $n\in\nbb$,
\end{enumerate}
are worth recalling. Observe that (ii) and (iv) together imply $\hsf_{\phi^n}=\hsf_{\phi}^n$ for every $n\in\nbb$, which in  view of \cite[Theorem 3.1]{2014-jmaa-budzynski-jablonski-jung-stochel} implies that $C_\phi$ is quasinormal. In particular, for such a composition operator (iii)  is automatically satisfied.
\end{rem}

\section{Examples}
Morrel and Muhly (\cite{1974-sm-morrel-muhly}) divided centered operators into four distinctive types according to the properties of the phase $U$ in the polar decomposition $T=U|T|$ of the operator in question. Recall that $T$ is {\em type I} (resp. {\em II, III, IV}) {\em centered} if $U$ is a pure isometry (resp. pure co-isometry, orthogonal sum of nilpotent operators, unitary).
Writing it in terms of intersections of ranges of powers of $T$ we have\footnote{Clearly, $\overline{\ob{|T^n|}}$ can be replaced by $\overline{\ob{T^{*n}}}$ in the above classification, but in the context of {\em wco}'s  $|T|$ is easier to handle than $T^*$}
\begin{itemize}
    \item $T$ is type I centered $\iff \bigcap_{n=1}^\infty \overline{\ob{T^n}} = \{0\}$ and $\bigcap_{n=1}^\infty \overline{\ob{|T^n|}} = \mathcal{H}$,
    \item $T$ is type II centered $\iff \bigcap_{n=1}^\infty \overline{\ob{T^n}} = \mathcal{H}$ and $\bigcap_{n=1}^\infty \overline{\ob{|T^n|}} = \{0\}$,
    \item $T$ is type III centered $\iff \bigcap_{n=1}^\infty \overline{\ob{T^n}} = \{0\}$ and $\bigcap_{n=1}^\infty \overline{\ob{|T^n|}} = \{0\}$,
    \item $T$ is type IV centered $\iff \bigcap_{n=1}^\infty \overline{\ob{T^n}} = \mathcal{H}$ and $\bigcap_{n=1}^\infty \overline{\ob{|T^n|}} = \mathcal{H}$.
\end{itemize}
We easily deduce the following.
\begin{pro}\label{wco_types}
Let $\cfw\in\ogr{L^2(\mu)}$ be centered. Then the following conditions are satisfied:
\begin{enumerate}
    \item[(i)] $\cfw$ is of type I or type IV if and only if $\mu\big(X\setminus \bigcap_{n=1}^\infty\{\hsf_{\phi^n, w_n}>0\}\big)=0$.
    \item[(ii)] $\cfw$ is of type II or type III if and only if $\mu\big(X\setminus \bigcup_{n=1}^\infty\{\hsf_{\phi^n, w_n}=0\}\big)=0$.
\end{enumerate}
\end{pro}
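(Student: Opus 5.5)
The plan is to use the range-based description of the Morrel--Muhly types recalled just before the statement, and to work only with the modulus part. The modulus of each power is a multiplication operator, so the question about $\bigcap_n\overline{\ob{|T^n|}}$ becomes a question about supports of the functions $\hsf_{\phi^n,w_n}$.

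\emph{Step 1: compute the closed ranges.} Since $\cfw\in\ogr{L^2(\mu)}$, the paper's (\ref{rzaska02}) and the proof of Theorem \ref{dan01} give $\cfw^n=C_{\phi^n,w_n}$ and $\cfw^{n*}\cfw^n=M_{\hsf_{n}}$. Hence
\begin{align*}
|\cfw^n|=M_{\sqrt{\hsf_{n}}}, \qquad n\in\nbb.
\end{align*}
For a multiplication operator by a nonnegative measurable function $g$ on a $\sigma$-finite space, we have $\overline{\ob{M_g}}=\jd{M_g}^\perp=\chi_{\{g>0\}}L^2(\mu)$. Therefore $\overline{\ob{|\cfw^n|}}=\chi_{\varDelta_n}L^2(\mu)$, where $\varDelta_n=\{\hsf_{n}>0\}$. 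Intersecting over $n$ yields
\begin{align*}
\bigcap_{n=1}^\infty\overline{\ob{|\cfw^n|}}=\chi_{\varDelta}L^2(\mu), \qquad \varDelta=\bigcap_{n=1}^\infty\varDelta_n.
\end{align*}
The inclusion ``$\supseteq$'' is clear. For ``$\subseteq$'', a function lying in every $\chi_{\varDelta_n}L^2(\mu)$ vanishes a.e. off each $\varDelta_n$, hence a.e. off $\varDelta$.

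\emph{Step 2: read off the dichotomy.} Centered operators fall into exactly the four types listed; I take this from \cite{1974-sm-morrel-muhly}, where a centered operator is an orthogonal sum of pieces of the four types. In the type characterizations above, types I and IV are exactly those with $\bigcap_n\overline{\ob{|T^n|}}=\hh$, and types II and III are exactly those with $\bigcap_n\overline{\ob{|T^n|}}=\{0\}$.

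\begin{itemize}
\item The equality $\chi_\varDelta L^2(\mu)=L^2(\mu)$ holds iff $\mu(X\setminus\varDelta)=0$. By $\sigma$-finiteness, characteristic functions of finite-measure subsets of $X\setminus\varDelta$ would otherwise lie outside. This gives (i).
\item The equality $\chi_\varDelta L^2(\mu)=\{0\}$ holds iff $\mu(\varDelta)=0$. Since $X\setminus\varDelta=\bigcup_n\{\hsf_{n}=0\}$, this says $\mu\big(X\setminus\bigcup_n\{\hsf_n=0\}\big)=0$, which gives (ii).
\end{itemize}

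\emph{Main obstacle.} The computation is routine. The delicate point is the logical one in Step 2: the ``if'' directions need the fact that a centered operator is of \emph{one} of the four types. If the paper's types are meant only as the pure types, so that a general centered operator is an orthogonal sum, then the ``if'' direction should be read as ``the type IV/I (resp. II/III) summands exhaust $T$''. To handle this, I would invoke the Morrel--Muhly decomposition. The space splits as $\hh_1\oplus\hh_2$ with $\hh_1=\bigcap_n\overline{\ob{|T^n|}}$ reducing, $T|_{\hh_1}$ of type I or IV, and the complement of type II or III. The measure conditions then say exactly that one of the two summands is trivial.
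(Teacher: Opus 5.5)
Your proposal is correct and follows the paper's argument. The paper's proof is the single observation $\jd{|\cfw^n|}=\jd{M_{\hsf_{\phi^n,w_n}}}=\chi_{\{\hsf_{\phi^n,w_n}=0\}}L^2(\mu)$, which is your Step~1, and the range description of the types is then read off exactly as you do.

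Your closing caveat is warranted, and the paper passes over it. The claim in Step~2 that every centered operator falls into one of the four types is false as stated, and with pure types the ``if'' directions genuinely fail. For example, the unilateral shift $\oplus$ bilateral shift, realized as a \emph{wco} on $\zbb_+\sqcup\zbb$ with counting measure, has $\hsf_{\phi^n,w_n}\equiv 1$ but is neither of type I nor of type IV. So (i) and (ii) must be read, as in your last paragraph, as saying that the type II/III (resp.\ I/IV) summands vanish.
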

\begin{proof}
Use $\jd{|C_{\phi, w}^n|}=\jd{|C_{\phi^n, w_n}|}=\jd{M_{\hsf_{\phi^n, w_n}}}=\chi_{\{\hsf_{\phi^n,w_n}=0\}}L^2(\mu)$, $n\in\nbb$.
\end{proof}
This section offers some insight into types of centered operators in the context of {\em wco}'s . We begin with an example of a family of type IV centered composition operators.
\begin{exa}
Let $\varkappa\in \nbb$. Let $X=\rbb^\varkappa$ and $\ascr=\borel{\rbb^\varkappa}$. Let $\rho(z) = \sum_{k=0}^\infty a_k z^k$, $z \in \cbb$, be an entire function such that $a_n$ is non-negative for every $k\in\zbb_+$ and $a_{k_0} > 0$ for some $k_0 \Ge 1$. Let $\mu=\mu^\rho$ be the $\sigma$-finite measure on $\ascr$ defined by
\begin{align*}
\mu^\rho(\sigma) = \int_\sigma \rho\big(\|x\|^2\big) \D\M_\varkappa(x),\quad \sigma\in\borel{\rbb^\varkappa},
\end{align*}
where $\M_\varkappa$ is the $\varkappa$-dimensional Lebesgue measure on $\rbb^\varkappa$ and $\|\cdot\|$ is a norm on $\rbb^\varkappa$ induced by an inner product. Finally, let $\phi\colon \rbb^\varkappa\to\rbb^\varkappa$ be invertible and linear. Then $\phi$ induces a composition operator $C_\phi$ in $L^2(\mu)$. The boundedness of $C_\phi$ has been fully characterized in \cite[Proposition 2.2]{1990-hmj-stochel}: \emph{if $\rho$ is a polynomial, then $C_\phi$ is bounded; if $\rho$ is not a polynomial, then $C_\phi$ is bounded if and only if  $\|\phi^{-1}\|\Le 1$}. Therefore, if $\rho$ is polynomial and/or $\|\phi^{-1}\|\Le 1$, we have $C_\phi\in \ogr{L^2(\mu)}$.

Since $\phi$ is an invertible transformation of $\rbb^n$, $\esf_\phi$ acts as an identity on $L^2(\mu)$. Thus, by Theorem \ref{dan01}, $C_\phi$ is centered. By the change of the variables theorem we have
\begin{align*} %\label{rn-matrix}
\hsf_{\phi^n}(x)= \frac{1}{|\det \phi^n|} \frac{\rho\big(\|\phi^{-n}(x)\|^2\big)}{\rho\big(\|x\|^2\big)}>0 \quad \text{ for $\mu^\rho$-a.e. } x \in \rbb^n.
\end{align*}
This and Proposition \ref{wco_types} immediately proves that $C_\phi$ is either type I or type IV centered.

Clearly, $f\in L^2(\mu^\rho)$ belongs to $\ob{C_\phi^n}$ if and only if there exists $g \in L^2(\mu^\rho)$ such that $f = g \circ \phi^n$, which implies $g = f \circ \phi^{-n}$. Thus, $f \in \ob{C_\phi^n}$ if and only if $f \circ \phi^{-n} \in L^2(\mu^\rho)$. Let $f \in C_c^\infty(\rbb^\varkappa)$, the space of smooth compactly supported functions on $\rbb^\varkappa$. Then $f \circ \phi^{-n}\in C_c^\infty(\rbb^\varkappa)\subseteq L^2(\mu^\rho)$. Since $C_c^\infty(\rbb^\varkappa)$ is dense in $L^2(\mu)$ for any Radon measure $\mu$, we see that
\begin{align*}
\bigcap_{n=1}^\infty \overline{\ob{C_\phi^n}} = L^2(\mu).    
\end{align*}
This implies that $C_\phi$ has to be type IV centered.
\end{exa}
The above example leads to the following criterion.
\begin{pro}
Let $C_{\phi,w}\in\ogr{L^2(\mu)}$ is injective and has dense range. Then $C_{\phi,w}$ is type IV centered.
\end{pro}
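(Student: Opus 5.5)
The plan is to verify centeredness through condition (D) of Theorem \ref{dan01}, and then read off the type from the range characterization recalled before Proposition \ref{wco_types}. Injectivity will give the $|T^n|$-part of that characterization and density of the range will give the $\ob{T^n}$-part. Throughout, write $T=\cfw$; by \cite[Theorem 18 and Lemma 44]{2018-lnim-budzynski-jablonski-jung-stochel} we have $T^n=C_{\phi^n,w_n}$ and $T^{n*}T^n=M_{\hsf_{\phi^n,w_n}}$, and each $\hsf_{\phi^n,w_n}$ is $\mu$-essentially bounded because $T^n$ is bounded.

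\emph{Step 1: centeredness.} Since $T$ has dense range, $\jd{T^*}=\{0\}$. As in the proof of Theorem \ref{cieczka01}, $\jd{\pfw}=\jd{T^*}$, so the orthogonal projection $\pfw$ is the identity: $f=w\,\efw(f_w)$ a.e. $[\mu]$ for every $f\in L^2(\mu)$. On $\{w\neq0\}$ this says $f_w=\efw(f_w)$ a.e. $[\mu_w]$.

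The map $f\mapsto f_w$ is an isometry from $L^2(\mu)$ onto $L^2(\mu_w)$, because $\int|f_w|^2\D\mu_w=\int_{\{w\neq0\}}|f|^2\D\mu$ and every $g\in L^2(\mu_w)$ equals $(gw)_w$. Hence $\efw$ is the identity on all of $L^2(\mu_w)$. Equivalently, every $\ascr$-measurable set coincides $\mu_w$-a.e. with a member of $\phi^{-1}(\ascr)$.

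Now I apply this to $g=\chi_{\varDelta}\hsf_{\phi^n,w_n}$ with $\mu_w(\varDelta)<\infty$; such $g$ lies in $L^2(\mu_w)$ since $\hsf_{\phi^n,w_n}$ is bounded. Exhausting $X$ by $\sigma$-finiteness, and using monotone convergence for the nonnegative extension of $\efw$, I get $\efw(\hsf_{\phi^n,w_n})=\hsf_{\phi^n,w_n}$ a.e. $[\mu_w]$ for all $n\in\nbb$. This is (D), so Theorem \ref{dan01} shows that $T$ is centered.

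\emph{Step 2: the type.} Since $T$ is injective, so is $T^n$, and therefore $\jd{|T^n|}=\jd{T^n}=\{0\}$. Thus $\overline{\ob{|T^n|}}=\jd{|T^n|}^\perp=L^2(\mu)$ for every $n$. Equivalently, $\hsf_{\phi^n,w_n}>0$ a.e. $[\mu]$, so Proposition \ref{wco_types}(i) already places $T$ in type I or type IV.

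Next, $\ob{T^n}$ is dense for each $n$ by induction: $T^{n+1}=T\,T^n$, and $T$ maps the dense set $\ob{T^n}$ onto a set that is dense in $\overline{\ob{T}}=L^2(\mu)$, by continuity of $T$. Hence $\bigcap_n\overline{\ob{T^n}}=L^2(\mu)$, which together with the previous paragraph is exactly the type IV criterion.

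The only point needing care is Step 1, namely passing from ``$\pfw=I$'' to ``$\efw$ fixes the unbounded-in-principle nonnegative function $\hsf_{\phi^n,w_n}$.'' I expect this to go through by the surjectivity of $f\mapsto f_w$ onto $L^2(\mu_w)$, combined with boundedness of $\hsf_{\phi^n,w_n}$ and a truncation by finite-measure sets. Alternatively, I would invoke Giselsson's criterion directly, as in the proof of (A)$\Leftrightarrow$(D): $T$ is half-centered by Proposition \ref{tolo1}, and $\jd{T^*}=\{0\}$ is trivially invariant for every $T^{*n}T^n$, so \cite[Proposition 2.1]{2018-oam-giselsson} gives centeredness immediately. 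This bypasses the measure theory and is the route I would actually write.
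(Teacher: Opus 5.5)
Your proof is correct. It differs from the paper's mainly in how much of the work is done by \emph{wco}-specific machinery.

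The paper proceeds as follows:
\begin{itemize}
\item it extracts $w\neq0$ a.e.\ $[\mu]$ from density of the range;
\item it writes $\jd{\cfw^*}=\{f\colon \efw(f_w)=0\}$, uses the bijection $f\mapsto f_w$ and \cite[Lemma 62]{2018-lnim-budzynski-jablonski-jung-stochel} to get $\efw=I$, and then cites Theorem \ref{dan01};
\item for the type, it asserts without argument that $\esf_{\phi^n,w_n}=I$ for all $n$, reads off density of $\ob{\cfw^n}$ from the kernel formula for $\cfw^{*n}$, and gets $\hsf_{\phi^n,w_n}>0$ from injectivity via \cite[Proposition 12]{2018-lnim-budzynski-jablonski-jung-stochel}.
\end{itemize}

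Your route differs at each stage:
\begin{itemize}
\item You obtain $\efw=I$ from the fact that $\pfw$ is the projection onto $\overline{\ob{\cfw}}$, plus surjectivity of $f\mapsto f_w$ onto $L^2(\mu_w)$. So you never need $w\neq0$ a.e.\ as a separate input.
\item You actually verify (D) by truncation and monotone convergence, which the paper leaves implicit.
\item You settle the type with generic facts: powers of a bounded dense-range operator have dense range, and $\jd{|T^n|}=\jd{T^n}$. This avoids the paper's unproved step $\efw=I\Rightarrow\esf_{\phi^n,w_n}=I$.
\end{itemize}

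Your Giselsson shortcut is the cleanest of all and proves more. By Proposition \ref{tolo1} and \cite[Proposition 2.1]{2018-oam-giselsson}, every bounded half-centered operator with dense range is centered. So density of the range alone makes $\cfw$ centered, and injectivity is needed only to pin down the type. What the measure-theoretic route (your Step 1 or the paper's) gives in exchange is the concrete mechanism: $\efw=I$, i.e.\ $\phi^{-1}(\ascr)$ exhausts $\ascr$ up to $\mu_w$-null sets.

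One small wording slip: $f\mapsto f_w$ is not an isometry of $L^2(\mu)$ in general, since your own formula gives $\int_{\{w\neq0\}}|f|^2\D\mu$. It becomes one only after noting that $\pfw=I$ forces every $f\in L^2(\mu)$ to vanish on $\{w=0\}$, i.e.\ $\mu(\{w=0\})=0$. Since you only use surjectivity, nothing breaks.
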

\begin{proof}
Since $C_{\phi,w}$ has dense range, we infer from \cite[Proposition 17(vi)]{2018-lnim-budzynski-jablonski-jung-stochel} that $w \neq 0$ a.e. $[\mu]$. The injectivity of $C_{\phi,w}$ implies, by \cite[Proposition 12]{2018-lnim-budzynski-jablonski-jung-stochel}, that $h_{\phi,w} > 0$ a.e. $[\mu_w]$.

Since $C_{\phi,w}$ has dense range, its adjoint $C_{\phi,w}^*$ is injective. According to \cite[Proposition 17(iii)]{2018-lnim-budzynski-jablonski-jung-stochel}, the kernel of the adjoint is given by $\jd{C_{\phi,w}^*} = \{f \in L^2(\mu) : E_{\phi,w}(f_w) = 0 \text{ a.e. } [\mu_w]\}$. Since $w \neq 0$ a.e. $[\mu]$, the mapping $f \mapsto f_w$ is a bijection from $L^2(\mu)$ onto $L^2(\mu_w)$. Consequently, the condition $\jd{C_{\phi,w}^*} = \{0\}$ implies that $\jd{\efw} = \{0\}$.
By \cite[Lemma 62]{2018-lnim-budzynski-jablonski-jung-stochel}, the triviality of the kernel of $E_{\phi,w}$ is equivalent to $\ob{E_{\phi,w}} = L^2(\mu_w)$. Since $E_{\phi,w}$ is an orthogonal projection in $L^2(\mu_w)$, we conclude that $E_{\phi,w}=I$. In particular, $\cfw$ is centered by Theorem \ref{dan01}

The condition $E_{\phi,w} = I$ implies that for every $n \in \mathbb{N}$,  $E_{\phi^n, w_n}$ acts as the identity operator on $L^2(\mu_{w_n})$. Consequently, by \cite[Proposition 17(iii)]{2018-lnim-budzynski-jablonski-jung-stochel}, $\jd{C_{\phi,w}^{*n}} = \{0\}$, which means that $\ob{\cfw^n}$ is dense in $L^2(\mu)$ for all $n \in \mathbb{N}$. Moreover, the injectivity of $C_{\phi,w}$ implies the injectivity of $C_{\phi,w}^n$, which by \cite[Proposition 12]{2018-lnim-budzynski-jablonski-jung-stochel} yields $h_{\phi^n, w_n} > 0$ a.e. $[\mu]$. Thus, the range of the modulus $|C_{\phi,w}^n| = M_{\sqrt{h_{\phi^n, \hat{w}_n}}}$ is dense in $L^2(\mu)$. Therefore,
\begin{align*}
\bigcap_{n=1}^\infty \overline{\ob{\cfw^n}} = L^2(\mu) \quad \text{and} \quad \bigcap_{n=1}^\infty \overline{\ob{|\cfw^n|}} = L^2(\mu).
\end{align*}
This shows that $C_{\phi,w}$ is a centered operator of type IV.
\end{proof}
\begin{rem}
If $T$ is centered, then obviously $T^*$ is centered. Also, we have:
\begin{enumerate}
    \item $T$ is of type I if and only if $T^*$ is of type II.
    \item $T$ is of type III if and only if $T^*$ is of type III.
    \item $T$ is of type IV if and only if $T^*$ is of type IV.
\end{enumerate}
\end{rem}
Below we give an example of type II centered wco.
\begin{exa}
Let $X = [0, \infty)$, $\ascr=\borel{X}$, and $\mu$ be the restriction of the Lebesgue measure to $\ascr$. Let $\phi: X \to X$ be the affine transformation $\phi(x) = x+1$. Then $C_\phi$ is well defined. In fact, $C_\phi$ is a co-isometry on $L^2(\mu)$. One can easily prove that $\esf_\phi=I$, which implies that $\cfw$ is centered. Let $T = C_\phi^*$. By induction
\begin{align*}
(T^n f)(x) = \begin{cases} 
      f(x-n) & \text{if } x \ge n, \\
      0 & \text{if } 0 \le x < n,
   \end{cases}
   \quad n\in\nbb.
\end{align*}
This implies that $T^n f=\chi_{[n,+\infty)}T^n f$. Consequently, we have
\begin{align}\label{string01}
\bigcap_{n=1}^\infty \overline{\ob{T^n}} \subseteq \bigcap_{n=1}^\infty \big\{ g \in L^2(\mu) : g=0 \text{ on } [0, n) \big\} = \{0\}.    
\end{align}
We now turn to the adjoint of $T$, that is $C_\phi$. We claim that $C_\phi$ is surjective. Indeed, for $g \in L^2(\mu)$ and $n\in\nbb$ we define $f_{g,n} \in L^2(\mu)$ by
\begin{align*}
f_{g,n}(x) = \begin{cases} 
      g(x-n) & \text{if } x \ge n, \\
      0 & \text{if } 0 \le x < n.
   \end{cases}
\end{align*}
It is easy to see that $C+\phi^n f_{g,n}=$. Hence, $\ob{T^{*n}} = L^2(\mu)$ for all $n \in \mathbb{N}$. Therefore,
\begin{align*}
\bigcap_{n=1}^\infty \overline{\ob{T^{*n}}} = L^2(\mu).    
\end{align*}
This and \eqref{string01} imply that $T$ is a type I centered and $C_\phi$ is type II centered.
\end{exa}

We now turn our attention towards weighted shifts on directed trees.  We begin by introducing necessary notation. 

Let $\tcal=(V,E)$ be a directed tree ($V$ and $E$ stand for the sets of vertexes and edges of $\tcal$, respectively). Set $\dzi u = \{v\in V\colon (u,v)\in E\}$ for $u \in V$. Denote by $\paa$ the partial function from $V$ to $V$ which assigns to a vertex $u\in V$ its parent $\pa{u}$ (i.e.\ a unique $v \in V$ such that $(v,u)\in E$). For $v\in V$, we define inductively $\dzin{n+1}{v}=\dzi{\dzin{n}{v}}$, $n\in\nbb$. A vertex $u \in V$ is called a {\em root} of $\tcal$ if $u$ has no parent. A root is unique (provided it exists); we denote it by $\koo$. Set $V^\circ=V \setminus \{\koo\}$ if $\tcal$ has a root and $V^\circ=V$ otherwise. We say that $u \in V$ is a {\em branching vertex} of $V$, and write $u \in V_{\prec}$, if $\dzi{u}$ consists of at least two vertexes. 

Assume $\lambdab=\{\lambda_v\}_{v \in V^{\circ}} \subseteq \cbb$ satisfies $\sup_{v\in V}\sum_{u\in\dzii{v}}|\lambda_u|^2<\infty$. Then the following formula
\begin{align*}%\label{xx}
(\slam f)(v)=
   \begin{cases}
\lambda_v \cdot f\big(\pa v\big) & \text{ if } v\in V^\circ,
   \\
0 & \text{ if } v=\koo,
   \end{cases}
\end{align*}
defines a bounded operator on $\ell^2(V)$ (as usual, $\ell^2(V)$ is the Hilbert space of square summable complex functions on $V$ with standard inner product). We call it a {\em weighted shift on a directed tree $\tcal$ with weights $\lambdab$}. We refer the interested reader to an excellent monograph \cite{2012-mams-j-j-s} on these operators. We will abbreviate ``weighted shift on a directed tree'' (resp. ``weighted shift'') to {\em wsdt} (resp. {\em ws}).

A {\em ws} $\slam$ on $\tcal=(V,E)$ with $\card{V}\leqslant\aleph_0$ can be viewed as a {\em wco} $\cfw$ in $L^2(X, \ascr, \mu)$, where $X=V$, $\ascr=2^V$, $\mu$ is the counting measure on $V$, $\phi$ is any extension of the $\paa$ partial function, and $w$ is given by
\begin{align*}
w(x)=
   \begin{cases}
\lambda_x & \text{ if } x\in V^\circ,
   \\
0 & \text{ if } v=\koo\text{ (in case it exists)}.
   \end{cases}
\end{align*}

All classical {\em ws}'s  are centered. Contrary to that, {\em wsdt}'s  may be not centered. In fact, even the simplest tree not related to classical shifts (i.e., a rootless directed tree with one branching vertex of valency 2) admits a non centered {\em ws}. This was shown in \cite[Example 12]{2025-arxiv-budzynski}. Centered {\em wsdt}'s  have a neat characterization - it essentially says that for centered $\slam$ the sums $\sum_{u\in \dzi{v}}|\lambda_u|^2$ are constant across every generation.
\begin{pro}\label{burasuka01}
Let $\slam\in\ogr{\ell^2(V)}$ with $\card{V}\leqslant\aleph_0$. Then $\slam$ is centered if and only if for every $u_1, u_2\in \dzi{v}$ such that there is $n\in\zbb_+$ such that $\paa^{n+1}(u_1)=\paa^{n+1}(u_2)$ and 
\begin{align*}
\lambda_{u_1}\lambda_{\paa(u_1)}\cdots \lambda_{\paa^n(u_1)}\lambda_{u_2}\lambda_{\paa(u_2)}\cdots\lambda_{\paa^n(u_2)}\neq 0
\end{align*}
the equality
\begin{align*}
\sum_{y\in\dzi{u_1}}|\lambda_{y}|^2=\sum_{y\in\dzi{u_2}}|\lambda_{y}|^2
\end{align*}
holds.
\end{pro}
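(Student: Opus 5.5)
We will derive the statement from Theorem \ref{dan01}, specifically from condition (D), applied to the realization of $\slam$ as a \emph{wco} $\cfw$ on $X=V$ with counting measure, $\phi$ an extension of $\paa$, and $w$ vanishing at the root. The first step is to compute the relevant objects explicitly in this discrete setting. For $v\in V$ we have $\hsf_{\phi^n,w_n}(v)=\sum_{u\in\dzin{n}{v}}|\lambda_u\lambda_{\paa(u)}\cdots\lambda_{\paa^{n-1}(u)}|^2$, i.e. $\|\slam^n e_v\|^2$. The measure $\mu_w$ is supported on $\{u\in V^\circ\colon\lambda_u\neq0\}$. The $\sigma$-algebra $\phi^{-1}(\ascr)$ is generated by the atoms $\dzi{v}$, $v\in V$. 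Consequently $\efw(f)$ is, on each $\dzi{v}$, the $|\lambda|^2$-weighted average of $f$ over $\dzi{v}\cap\{\lambda\neq0\}$.

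With this description, condition (D) for a given $n$ says the following: $\hsf_{\phi^n,w_n}$ is constant on $\dzi{v}\cap\{\lambda_u\neq0\}$ for every $v$, since a function equals its conditional expectation a.e. $[\mu_w]$ iff it is $\phi^{-1}(\ascr)$-measurable modulo $\mu_w$-null sets. In other words, we need $\|\slam^n e_{u_1}\|=\|\slam^n e_{u_2}\|$ for siblings $u_1,u_2$ with $\lambda_{u_1}\lambda_{u_2}\neq0$.

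The second step is to translate this family of conditions, over all $n\in\nbb$, into the generational statement of the proposition. We expand $\hsf_{\phi^{n+1},w_{n+1}}(u)=\sum_{y\in\dzi{u}}|\lambda_y|^2\hsf_{\phi^n,w_n}(y)$ and argue by induction on $n$. Suppose the one-step sums $\sum_{y\in\dzi{x}}|\lambda_y|^2$ agree for all pairs $x_1,x_2$ lying in the same generation below a common ancestor and joined to it by nonzero-weight paths. Then by induction $\hsf_{\phi^n,w_n}$ is constant along such generations, and (D) follows. Conversely, assume (D) for all $n$. We compare $x_1,x_2$ with $\paa^{n+1}(x_1)=\paa^{n+1}(x_2)$ and nonzero weight paths by walking down from the common ancestor. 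Applying (D) for successive $n$ together with the recursion, and peeling off the already established equalities at shallower levels, yields equality of the one-step sums at depth $n$. I read the hypothesis ``$u_1,u_2\in\dzi{v}$'' in the statement as meaning $u_1,u_2$ in a common generation below a common ancestor, the nonvanishing product being exactly the path condition making them $\mu_{w_{n+1}}$-relevant.

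The main obstacle I expect is this converse bookkeeping. Condition (D) only compares immediate siblings, while the claim concerns cousins at distance $n+1$. Equality of $\hsf_{\phi^n,w_n}$ at two siblings gives a weighted sum over their subtrees, not pointwise equality deeper down. To overcome this I would instead use condition (H) (or (F)), which conditions with respect to $\phi^{-k}(\ascr)$, whose atoms are exactly the generations $\dzin{k}{v}$. Condition (F) with $k=n+1$ directly says that $\hfw$, i.e. $x\mapsto\sum_{y\in\dzi{x}}|\lambda_y|^2$, is constant on $\dzin{n+1}{v}$ modulo the $\mu_{w_{n+1}}$-null set. That null set is precisely the set of vertices whose path weight product vanishes. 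This makes both directions essentially immediate, so I would base the proof on (A)$\Leftrightarrow$(F) rather than on (D).
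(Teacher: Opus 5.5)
Your final plan is correct and is essentially the paper's proof: compute $\hfw(x)=\sum_{y\in\dzi{x}}|\lambda_y|^2$, note that $\phi^{-(n+1)}(\ascr)$-measurable functions are constant on the atoms $\dzin{n+1}{v}$ up to the $\mu_{w_{n+1}}$-null set $\{w_{n+1}=0\}$ (which also absorbs the root and the arbitrary value $\phi(\koo)$), and read the statement off from (A)$\Leftrightarrow$(F) of Theorem \ref{dan01}. Your reading of ``$u_1,u_2\in\dzi{v}$'' as ``$u_1,u_2\in V$ in a common generation'' is the right one, since the literal sibling reading would only encode (F) for $n=1$, i.e.\ weak centeredness. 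The detour through (D) is unnecessary.
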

\begin{proof}
Use the equality (see \cite[Proposition 79]{2018-lnim-budzynski-jablonski-jung-stochel})
\begin{align*}%\label{mucha1}
\hfw(x)={\sum_{y\in\phi^{-1}(\{x\})}|w(y)|^2}=\sum_{y\in\dzi{x}}|\lambda_y|^2,\quad x\in X,
\end{align*}
the fact that $\phi^{-n}(\ascr)$-measurable functions are constant on sets $\phi^{-n}(v)$, $v\in V$, and apply Theorem \ref{dan01}\,(F).
\end{proof}
It is easy to see that ``$\paa^{k}(u)=\paa^{k}(v)$ for some $k\in\nbb$'' defines an equivalence relation $\sim$ on $V$. The equivalence classes $[v]_{\sim}$, $v\in V$, form {\em generations} of $\tcal$. Proposition \ref{burasuka01} implies that if $\lambda_v\neq0$ for all $v\in V$, $\slam$ is centered if and only if 
\begin{align*}
u\mapsto \sum_{y\in\dzi{u}}|\lambda_y|^2
\end{align*}
is constant on $[v]_\sim$, $v\in V^\circ$. In particular, the trees supporting centered unweighted shifts have to be ''generationally regular''.
\begin{cor}
Let $\tcal=(V,E)$ be a directed tree. Let $S \in \ogr{\ell^2(V)}$ be a weighted shift on $\tcal$ such that $\lambda_v = 1$ for all $v \in V$). Then $S$ is centered if and only if the valency $\card{\dzi{u}}$ is constant on every generation $[v]_\sim$, $v\in \tcal$.
\end{cor}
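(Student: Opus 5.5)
We deduce the corollary from Proposition \ref{burasuka01} applied with all weights equal to $1$. The boundedness of $S$ means $\sup_{v\in V}\card{\dzi{v}}<\infty$. Since $\card{V}\leqslant\aleph_0$ is implicit in the setting of Proposition \ref{burasuka01}, we note that for a bounded unweighted shift the tree is locally finite. If $V$ were uncountable we would argue generation by generation, or simply assume countability as in the proposition. The plan is therefore to translate the condition of Proposition \ref{burasuka01} into a statement about valencies.

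With $\lambda_v=1$ for every $v\in V^\circ$, the product $\lambda_{u_1}\lambda_{\paa(u_1)}\cdots\lambda_{\paa^n(u_2)}$ is always nonzero, provided all the vertices involved lie in $V^\circ$. This holds automatically: $u_i$ and its ancestors up to $\paa^n(u_i)$ have parents, because $\paa^{n+1}(u_i)$ exists. Moreover $\sum_{y\in\dzi{u}}|\lambda_y|^2=\card{\dzi{u}}$, since children are never the root. So Proposition \ref{burasuka01} says that $S$ is centered if and only if $\card{\dzi{u_1}}=\card{\dzi{u_2}}$ whenever $\paa^{n+1}(u_1)=\paa^{n+1}(u_2)$ for some $n\in\zbb_+$. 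This is exactly the remark after the proposition for nonvanishing weights.

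It remains to identify the pairs $(u_1,u_2)$ with $\paa^{k}(u_1)=\paa^{k}(u_2)$ for some $k\in\nbb$ with the relation $u_1\sim u_2$. This is immediate from the definition of $\sim$: put $k=n+1$. Hence centeredness is equivalent to $u\mapsto\card{\dzi{u}}$ being constant on each equivalence class $[v]_\sim$.

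The only delicate point is bookkeeping. The hypothesis $u_1,u_2\in\dzi{v}$ in the statement of Proposition \ref{burasuka01} should be read as ``$u_1,u_2\in V$'', consistent with the subsequent remark. We also need to handle the root: if $\tcal$ has a root, then $[\koo]_\sim=\{\koo\}$, so no condition arises there and it does not affect the equivalence. I expect this reconciliation of indices and the treatment of the root class to be the only place needing care; the rest is direct substitution.
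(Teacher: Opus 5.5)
Your proposal is correct and follows the paper's own route: specialize Proposition \ref{burasuka01} to unit weights, note that the product condition holds automatically and that $\sum_{y\in\dzi{u}}|\lambda_y|^2=\card{\dzi{u}}$, and identify the index condition with the relation $\sim$. Your additional bookkeeping is sound: reading $u_1,u_2\in V$ rather than $u_1,u_2\in\dzi{v}$, and treating the root as a singleton generation. The countability hedge is unnecessary, because bounded valencies together with the fact that any two vertices of a directed tree share a common ancestor already force $\card{V}\leqslant\aleph_0$.
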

\begin{proof}
Since $\lambda_v=1$ for all $v \in V$, we have $\sum_{y \in \dzi{u}} |\lambda_y|^2 = \sum_{y \in \dzi{u}} 1 = \card{\dzi{u}}$.
By Proposition \ref{burasuka01}, $S$ is centered if and only if this sum is constant on every generation $[v]_\sim$. Thus, $S$ is centered if and only if for every generation, all vertices in that generation have the same number of children.
\end{proof}

Below we use (2) above and provide a simple example of a {\em wsdt}'s  which is weakly centered but not centered.
\begin{exa}\label{blackblack+}
Let $\tcal = (V,E)$ be the directed tree with $$V=\{-k\colon k\in\zbb_+\}\cup\big\{(1,1),(1,2),(2,1),(2,2)\big\}\cup\big\{(n,i)\colon n\in\nbb, n\geqslant, i\in\{1,2,3\} \big\}$$ and $E$ as in as in Figure \ref{truffaz-fig}.\allowdisplaybreaks
\begin{figure}[ht]
\begin{center}
\begin{tikzpicture}[scale=0.8, transform shape,edge from parent/.style={draw,to}]
\tikzstyle{every node} = [circle,fill=gray!30]
\node[fill=none] (e-3) at (-4,0) {};
\node (e-1)[font=\footnotesize, inner sep = 3pt] at (-2,0) {$-1$};
\node (e10)[font=\footnotesize, inner sep = 3pt] at (0,0) {$0$};

\node (e11)[font=\footnotesize, inner sep = 1pt] at (4.5,1.5) {$(2,1)$};
\node (e12)[font=\footnotesize, inner sep = 1pt] at (7,1.5) {$(3,1)$};
\node[fill = none] (e1n) at(9,1.5) {};

\node (g11)[font=\footnotesize, inner sep = 1pt] at (2,1.5) {$(1,1)$};

\node (f11)[font=\footnotesize, inner sep = 1pt] at (2,-1.5) {$(1,2)$};
\node (f12)[font=\footnotesize, inner sep = 1pt] at (4.5,-1.5) {$(2,2)$};
\node (f14)[font=\footnotesize, inner sep = 1pt] at (7,-3) {$(3,3)$};
\node[fill = none] (f1n) at(9,-3) {};
\node[fill = none] (f2n) at(9,0) {};

\node (f13)[font=\footnotesize, inner sep = 1pt] at (7,0) {$(3,2)$};

\draw[dashed,->] (e-3) --(e-1) node[pos=0.5,above = 0pt,fill=none] {$1$};
\draw[->] (e-1) --(e10) node[pos=0.5,above = 0pt,fill=none] {$1$};
\draw[->=stealth] (g11) --(e11) node[pos=0.5,above = 0pt,fill=none] {$1$};
\draw[->=stealth] (e10) --(g11) node[pos=0.5,above = 0pt,fill=none] {$1$};

\draw[->] (e10) --(f11) node[pos=0.5,above = 0pt,fill=none] {$1$};
\draw[->] (e11) --(e12) node[pos=0.5,above = 0pt,fill=none] {$1$};
\draw[->] (f12) --(f13) node[pos=0.5,above = 0pt,fill=none] {$1$};
\draw[->] (f11) --(f12) node[pos=0.5,below = 0pt,fill=none] {$1$};
\draw[->] (f12) --(f14) node[pos=0.5,above = 0pt,fill=none] {$1$};
\draw[dashed, ->] (f14)--(f1n);
\draw[dashed, ->] (f13)--(f2n);
\draw[dashed, ->] (e12)--(e1n);
\end{tikzpicture}
\end{center}
\caption{\label{truffaz-fig} The directed tree $\tcal$ considered in Example \ref{blackblack+}.}
\end{figure} 
Let $\lambdab=\{\lambda_v\}_{v\in{V^\circ}}$ satisfy  $\lambda_v=1$ for $v\in V$. Clearly, in view of (2), $\slam\in\ogr{\ell^2(V)}$ is not centered. However, in view of Proposition \cite[Proposition 11]{2025-arxiv-budzynski} weakly centered. 
\end{exa}
\begin{rem}
In view of \cite[Proposition 3.1.6]{2012-mams-j-j-s}, any {\em ws} $\slam\in\ogr{\ell^2(V)}$ can be decomposed into an orthogonal sum $\bigoplus_{j\in J}S_{\lambdab^j}$ of {\em ws}'s  $S_{\lambdab^j}$ with nonzero weights. Informally speaking, $\slam$ is cut into pieces at edges ending at vertexes with zero weights. One can use this fact and Proposition \ref{burasuka01} to show that $\slam$ is centered if and only if each $S_{\lambdab^j}$ in the decomposition $\bigoplus_{j\in J}S_{\lambdab^j}$ is centered. This implies that when dealing with centered weighted shifts on directed trees we may in many cases restrict ourselves to the ones that have nonzero weights. 
\end{rem}
As shown by Morrell and Mulhy, every centered $T\in\ogr{\hh}$ can be decomposed into 
\begin{align*}
T=T|_{\hh_I}\oplus T|_{\hh_{II}}\oplus T|_{\hh_{III}}\oplus T|_{\hh_{IV}},
\end{align*}
with $T|_{\hh_{N}}$ being type $N$ centered, $N\in\{I, II, III, IV\}$. For injective centered operators type II and type III summands are trivial.
\begin{lem}\label{red}
Let $T \in \boldsymbol{B}(\mathcal{H})$ be a centered operator. If $T$ is injective, then $\hh_{II}$ and $\hh_{III}$ are trivial. Consequently, $T=T|_{\hh_I}\oplus T|_{\hh_{IV}}$.
\end{lem}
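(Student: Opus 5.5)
The plan is to use the Morrel--Muhly decomposition together with the characterization of types through the phase. In the decomposition $T=T|_{\hh_I}\oplus T|_{\hh_{II}}\oplus T|_{\hh_{III}}\oplus T|_{\hh_{IV}}$ each $\hh_N$ reduces $T$, so each summand $T|_{\hh_N}$ is again injective. It therefore suffices to show that an injective centered operator cannot be of type II or type III unless the space on which it acts is $\{0\}$. I would phrase everything through the range-of-modulus description recalled above: type II and type III are exactly the centered operators with $\bigcap_{n=1}^\infty \overline{\ob{|T^n|}}=\{0\}$.

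The key observation is that injectivity passes to powers: if $T$ is injective then so is $T^n$ for every $n\in\nbb$. Since $\jd{|T^n|}=\jd{T^n}=\{0\}$ and $|T^n|$ is selfadjoint, we get $\overline{\ob{|T^n|}}=\jd{|T^n|}^\perp=\mathcal K$ for every $n$, where $\mathcal K$ is the underlying space of the summand. Hence $\bigcap_{n}\overline{\ob{|T^n|}}=\mathcal K$.

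Now I compare this with the type II/III condition, which requires this intersection to be $\{0\}$. This forces $\mathcal K=\{0\}$, so $\hh_{II}=\hh_{III}=\{0\}$, and hence $T=T|_{\hh_I}\oplus T|_{\hh_{IV}}$.

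An alternative route, in case one prefers to work with the phases, is the following. For type III the phase $U$ is an orthogonal sum of nilpotents. On the other hand, centeredness gives $U^n=V_n$, the phase of $T^n$, whose kernel equals $\jd{T^n}=\{0\}$. A nilpotent injective operator must live on the zero space, which again kills $\hh_{III}$. For type II the phase $U$ is a pure co-isometry, and $\jd{U}=\jd{T}=\{0\}$, so $U$ is an injective co-isometry. An injective co-isometry is unitary, and a unitary cannot be a \emph{pure} co-isometry on a nonzero space; this kills $\hh_{II}$.

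The only point needing care is confirming that the restriction of $T$ to $\hh_N$ is indeed centered of type $N$ and that the modulus and phase restrict correctly to $\hh_N$. This follows because each $\hh_N$ reduces $T$, and hence reduces $|T^n|$ and the phases as well. I expect this bookkeeping to be the main obstacle, though it is routine.
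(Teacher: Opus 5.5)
Your argument is correct and is essentially the paper's proof. Injectivity gives $\overline{\ob{|T^n|}}=\jd{T^n}^\perp$ equal to the whole space, which rules out the type II/III condition $\bigcap_n\overline{\ob{|T^n|}}=\{0\}$ on any nonzero summand. The only difference is ordering: you restrict to each reducing summand $\hh_N$ first (where $T|_{\hh_N}$ is still injective), whereas the paper computes the intersection on all of $\hh$ and then splits it along the decomposition; your phase-based alternative is also valid.
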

\begin{proof}
Since $T$ is injective, $\jd{T^n} = \{0\}$ for all $n \in \mathbb{N}$. Hence, we obtain
\begin{align}\label{red01}
\bigcap_{n=1}^\infty\overline{\ob{|T^n|}} = \bigcap_{n=1}^\infty \jd{T^n}^\perp = \mathcal{H}.
\end{align}
Let $\mathcal{H} = \mathcal{H}_I \oplus \mathcal{H}_{II} \oplus \mathcal{H}_{III} \oplus \mathcal{H}_{IV}$ be the decomposition of $\mathcal{H}$ into reducing subspaces corresponding to the type I, type II, type III, and type IV parts of $T$, respectively. Since each of these subspaces reduces $T$, it also reduces $|T^n|$ for all $n \in \mathbb{N}$. Therefore, we get
\begin{align}\label{red02}
\bigcap_{n=1}^\infty \overline{\ob{|T^n|}} 
= \left( \bigcap_{n=1}^\infty \overline{\ob{|T|_{\hh_{I}}^n}} \right) \oplus \dots \oplus \left( \bigcap_{n=1}^\infty \overline{\ob{|T|_{\hh_{IV}}^n}} \right).
\end{align}
Type II and type III parts of $T$ must satisfy
\begin{align*}
\bigcap_{n=1}^\infty \overline{\ob{|T|_{\hh_{N}}^n}} = \{0\}, \quad N \in \{II, III\}.
\end{align*}
This, \eqref{red01}, and \eqref{red02} imply that 
\begin{align*}
\hh_{N}=P_{\hh_N} \hh= P_{\hh_N} \bigcap_{n=1}^\infty \overline{\ob{|T|_{\hh_{N}}^n}} = \{0\},\quad N \in \{II, III\}.
\end{align*}
Consequently, $T$ is the orthogonal sum of a type I centered operator and a type IV centered operator.
\end{proof}
Using the above we deduce a criterion for a weighted shifts on leafless and rootless directed trees to be type I centered .
\begin{cor}\label{cook01}
Let $\tcal=(V,E)$ be a rootless and leafless directed tree. Let $S_\lambda \in \ogr{\ell^2(v)}$ be a weighted shift on $\tcal$ with nonzero\footnote{With nonzero weights, $V$ is required to be at most countable whenever $\slam$ is bounded} weights $\lambda=\{\lambda_v\}_{v \in V}$.
If $S_\lambda$ is centered, then the following conditions are equivalent:
\begin{enumerate}
    \item[(i)] $S_\lambda$ is of type I centered,
    \item[(ii)] $\bigcap_{n=1}^\infty \overline{\ob{\slam^n}} = \{0\}$.
\end{enumerate}
Moreover, if $\slam$ is type I centered, then the following condition hold:
\begin{enumerate}
    \item[(iii)] $\tcal$ is not isomorphic to $\mathbb{Z}$ (equivalently, $\tcal$ has at least one branching vertex)
\end{enumerate}
\end{cor}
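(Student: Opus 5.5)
Since the weights are nonzero and $\tcal$ is leafless, $\slam$ is injective. Indeed, if $\slam f=0$, then $\lambda_v f(\pa v)=0$ for every $v\in V^\circ=V$. Every $u\in V$ has a child $v$, so $f(u)=0$. By Lemma \ref{red}, the centered operator $\slam$ therefore decomposes as $\slam=\slam|_{\hh_I}\oplus\slam|_{\hh_{IV}}$, and
\begin{align*}
\bigcap_{n=1}^\infty \overline{\ob{|\slam^n|}}=\ell^2(V)
\end{align*}
by \eqref{red01}. This already settles the second half of the type conditions.

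For (i)$\Rightarrow$(ii) we use the characterization of type I from the list of type descriptions preceding Proposition \ref{wco_types}: type I means in particular $\bigcap_n\overline{\ob{\slam^n}}=\{0\}$. For (ii)$\Rightarrow$(i), we combine (ii) with the identity displayed above, and the type I characterization gives (i) at once.

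Alternatively, one can argue through the decomposition. Since each $\hh_N$ reduces $\slam$, the intersection $\bigcap_n\overline{\ob{\slam^n}}$ splits as the orthogonal sum of the corresponding intersections for the summands. The type IV summand contributes all of $\hh_{IV}$, so (ii) forces $\hh_{IV}=\{0\}$. The only care needed is that reducing subspaces commute with taking closures of ranges of powers; this is the same routine argument as in \eqref{red02}.

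For (iii), assume $\slam$ is type I and suppose, towards a contradiction, that $\tcal$ is isomorphic to $\zbb$. Then $\slam$ is a bilateral weighted shift with nonzero weights. Every vertex has exactly one child, so $\hfw(x)=|\lambda_{\text{child}(x)}|^2>0$. The map $\phi=\paa$ is a bijection of $V$, and hence $\efw=I$. Each $\slam^n$ is then injective, and its range is dense: $\slam^n e_u=c\,e_{v}$ with $c\neq0$, where $v$ is the unique descendant of $u$ at distance $n$, and every vertex arises as such a descendant because $V$ is rootless. So $\bigcap_n\overline{\ob{\slam^n}}=\ell^2(V)\neq\{0\}$, contradicting (ii). (Equivalently, the last proposition before the example of a type II operator applies directly and shows that $\slam$ is type IV.)

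Finally, for a rootless leafless tree, being non-isomorphic to $\zbb$ is the same as having a branching vertex. Indeed, if no vertex branches, every vertex has exactly one child and one parent, so $V$ is a single bi-infinite path. Connectedness of the tree rules out several components.

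The main obstacle is only bookkeeping, namely checking that the range description of type I applies verbatim here; the rest follows from Lemma \ref{red}.
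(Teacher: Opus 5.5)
Your proof is correct and follows the paper's argument. You check injectivity of $\slam$ explicitly, which the paper leaves implicit, and this lets Lemma~\ref{red} reduce matters to $\hh_I\oplus\hh_{IV}$. Your alternative argument, splitting $\bigcap_n\overline{\ob{\slam^n}}$ over the two summands, is exactly the paper's proof of (i)$\Leftrightarrow$(ii). You obtain (iii) as the paper does, from density of $\ob{\slam^n}$ for a bilateral shift with nonzero weights. Your primary route to (i)$\Leftrightarrow$(ii), combining the listed range characterization of type I with \eqref{red01}, is a slightly more direct variant: it needs only injectivity, not the decomposition.
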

\begin{proof}
In view of Lemma \ref{red}, type II and type III centered parts of $\slam$ ar trivial.

(i) $\Leftrightarrow$ (ii)
$S_\lambda$ being type I centered means $\hh_{IV}=\{0\}$ and $\bigcap_{n=1}^\infty \overline{\ob{\slam|_{\hh_I}^n}}=\{0\}$. Since
\begin{align*}
\bigcap_{n=1}^\infty \overline{\ob{\slam^n}} = \bigcap_{n=1}^\infty \overline{\ob{\slam|_{\hh_I}^n}} \oplus \bigcap_{n=1}^\infty \overline{\ob{\slam|_{\hh_{IV}}^n}},
\end{align*}
we deduce the equivalence of (i) and (ii).

(ii) $\Rightarrow$ (iii) Suppose, contrary to our claim, that $\tcal$ is isomorphic to $\mathbb{Z}$. Then, by \cite[Remark 3.1.4]{2012-mams-j-j-s}, $S_\lambda$ is unitarily equivalent to a bilateral weighted shift with nonzero weights. Clearly, for such a $\slam$ we have $\overline{\ob{\slam^n}} = l^2(V)$ for all $n$. This contradicts (ii).
\end{proof}

\begin{exa}\label{exa:binary_type1}
Let $\tcal=(V,E)$ be a rootless and leafless directed tree such that $\card{\dzi{u}}=2$ for all $u\in V$. Let $\slam \in \ogr{\ell^2(V)}$ be a weighted shift with weights $\lambda_v=1$ for all $v\in V$. We show that  $\slam$ is type I centered.

We first observe that for any $u \in V$, $\sum_{y\in\dzi{u}}|\lambda_y|^2 = 1^2 + 1^2 = 2$, i.e., it is constant on every generation $[v]_\sim$. Hence, $\slam$ is a centered operator by Proposition \ref{burasuka01}. In view of Corollary \ref{cook01}, to prove that $\slam$ is of type I it suffices to show that $\bigcap_{n=1}^\infty \ob{\slam^n}=\{0\}$. Suppose $f \in \ob{\slam^n}$ for some $n\in\nbb$. By \cite[Lemma 3.1.2]{2012-mams-j-j-s}, we have
\begin{align*}
\slam^n e_u = \sum_{v \in \dzin{n}{u}} e_v,\quad u\in V, n\in \nbb.
\end{align*}
Thus $f = \slam^n g =\sum_{u \in V} g(u) \sum_{v \in \dzin{n}{u}} e_v$ with some $g \in \ell^2(V)$. Because descendants $\dzin{n}{u}$ are pairwise disjoint sets for distinct $u$ in the same generation, $f$ must be constant on $\dzin{n}{u}$ for every $u \in V$. Specifically, if $v \in \dzin{n}{u}$, then $f(v) = g(u)$.
Consequently,
\begin{align*}
\|f\|^2 = \sum_{v \in V} |f(v)|^2 \Ge \sum_{v \in \dzin{n}{u}} |f(v)|^2 = \sum_{v \in \dzin{n}{u}} |g(u)|^2 = 2^n |g(u)|^2 = 2^n |f(v)|^2,
\end{align*}
where $v$ is any fixed vertex in $\dzin{n}{u}$. This implies $|f(v)|^2 \Le 2^{-n} \|f\|^2$ for such a $v$. If $f \in \bigcap_{n=1}^\infty \ob{\slam^n}$, we get $f(v)=0$ for all $v \in V$. Thus, $\bigcap_{n=1}^\infty \ob{\slam^n}$ is trivial.
\end{exa}
\begin{exa}\label{exa:Y_tree_not_type1}
Consider the directed tree $\tcal=(V,E)$ defined as follows (see Figure \ref{fig:Y_tree}):
\begin{itemize}
    \item $V = \{ -k \colon k \in \nbb_0 \} \cup \{ (i,j) \colon i \in \{1,2\}, j \in \nbb \}$,
    \item $E = \{ (-k-1, -k) \colon k \in \nbb_0 \} \cup \{ (0, (1,1)), (0, (2,1)) \} \cup \{ ((i,j), (i,j+1)) \colon i \in \{1,2\}, j \in \nbb \}$.
\end{itemize}
Let $\slam$ be the weighted shift on $\tcal$ with weights $\lambda_v=1$ for all $v \in V$. Note that $\card{\dzi{0}}=2$ and $\card{\dzi{v}}=1$ for all $v \neq 0$.
\begin{figure}[ht]
\begin{center}
\begin{tikzpicture}[scale=0.8, transform shape]
\tikzstyle{every node} = [circle,fill=gray!30]

% Trunk nodes
\node[fill=none] (trunk_inf) at (-5.5,0) {};
\node (trunk_2) [font=\footnotesize, inner sep=3pt] at (-3.5,0) {$-2$};
\node (trunk_1) [font=\footnotesize, inner sep=3pt] at (-1.5,0) {$-1$};
\node (root) [font=\footnotesize, inner sep=3pt] at (0.5,0) {$0$};

% Branch 1 (Upper) nodes
\node (b1_1) [font=\footnotesize, inner sep=1pt] at (2.5,1.5) {$(1,1)$};
\node (b1_2) [font=\footnotesize, inner sep=1pt] at (4.5,1.5) {$(1,2)$};
\node[fill=none] (b1_inf) at (6,1.5) {};

% Branch 2 (Lower) nodes
\node (b2_1) [font=\footnotesize, inner sep=1pt] at (2.5,-1.5) {$(2,1)$};
\node (b2_2) [font=\footnotesize, inner sep=1pt] at (4.5,-1.5) {$(2,2)$};
\node[fill=none] (b2_inf) at (6,-1.5) {};

% Edges (Arrows indicate parent -> child direction)
% Trunk edges
\draw[dashed,->] (trunk_inf) -- (trunk_2);
\draw[->] (trunk_2) -- (trunk_1)node[pos=0.5,above = 0pt,fill=none]{$1$};
\draw[->] (trunk_1) -- (root)node[pos=0.5,above = 0pt,fill=none]{$1$};

% Branch 1 edges
\draw[->] (root) -- (b1_1)node[pos=0.5,above = 0pt,fill=none]{$1$};
\draw[->] (b1_1) -- (b1_2)node[pos=0.5,above = 0pt,fill=none]{$1$};
\draw[dashed,->] (b1_2) -- (b1_inf);

% Branch 2 edges
\draw[->] (root) -- (b2_1) node[pos=0.5,above = 0pt,fill=none]{$1$};
\draw[->] (b2_1) -- (b2_2) node[pos=0.5,above = 0pt,fill=none]{$1$};
\draw[dashed,->] (b2_2) -- (b2_inf);

\end{tikzpicture}
\end{center}
\caption{\label{fig:Y_tree} The directed tree $\tcal$ considered in Example \ref{exa:Y_tree_not_type1}.}
\end{figure}
$\slam$ is easily seen to be centered by Proposition \ref{burasuka01}. However, $\slam$ is not of type I. For this it suffices to show that $\bigcap_{n=1}^\infty \ob{\slam^n} \neq \{0\}$.

As in Example \ref{exa:binary_type1}, $f \in \ob{\slam^n}$ if and only if $f$ is constant on $\dzin{n}{u}$ for any $u \in V$. In our case, for any $k \in \nbb$, the vertexes $(1,k)$ and $(2,k)$ are the only elements of $\dzin{k}{0}$ (and $\dzin{k+m}{-m}$ for $m\geqslant 1$). Therefore, any function $f \in \ell^2(V)$ satisfying the symmetry condition $f((1,k)) = f((2,k))$ for all $k \in \nbb$ belongs to $\bigcap_{n=1}^\infty \ob{\slam^n}$. In particular, $f \in \ell^2(V)$ defined by
\begin{align*}
f(v) = \begin{cases}
2^{-k} & \text{if } v = (1,k) \text{ or } v = (2,k), \\
0 & \text{otherwise}.
\end{cases}
\end{align*}
satisfies the symmetry condition. This means that the type IV centered summand in the decomposition $\slam=\slam|_{\hh_I}\oplus \slam|_{\hh_{II}}$ is not trivial.
\end{exa}
The argument used in Example \ref{exa:binary_type1} leads to a concrete criterion for $\slam$ to be type I centered.
\begin{pro}\label{bigcook01}
Let $\tcal=(V,E)$ be a rootless and leafless directed tree. Let $S_\lambda \in \ogr{\ell^2(v)}$ be a centered weighted shift on $\tcal$ with nonzero weights $\lambda=\{\lambda_v\}_{v \in V^{\circ}}$. If the following condition hold\footnote{Here, and later on, $\paa^n$ stands for the $n$-fold composition of the $\paa$ partial function while $\lambda_{\paa^n(v)|v}= \lambda_v\cdot \lambda_{\pa{v}}\cdots \lambda_{\paa^{n-1}(v)}$, $v\in V$.}
\begin{align}\label{bigcook02}
\lim_{n \to \infty} \frac{|\lambda_{\paa^n(v)|v}|^2}{\|S_\lambda^n e_{\paa^n(v)}\|^2} = 0,\quad v\in V,
\end{align}
then $S_\lambda$ is type I centered.
\end{pro}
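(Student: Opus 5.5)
By Corollary \ref{cook01}, since $\slam$ is centered with nonzero weights on a rootless and leafless tree, it suffices to prove that $\bigcap_{n=1}^\infty \overline{\ob{\slam^n}}=\{0\}$. Equivalently, we must show that no nonzero $f\in\ell^2(V)$ is orthogonal to $\jd{\slam^{*n}}$ for every $n$. Put differently, every $f$ in the intersection must satisfy $f(v)=0$ for each fixed $v\in V$. We follow the scheme of Example \ref{exa:binary_type1}, replacing the count $2^n$ by the weighted quantity $\|\slam^n e_{\paa^n(v)}\|^2$.

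\textbf{Step 1: elements of $\overline{\ob{\slam^n}}$.} Fix $n\in\nbb$ and $v\in V$, and set $u=\paa^n(v)$, which exists because $\tcal$ is rootless. By \cite[Lemma 3.1.2]{2012-mams-j-j-s},
\begin{align*}
\slam^n e_u=\sum_{y\in\dzin{n}{u}}\lambda_{u|y}\,e_y,
\end{align*}
where $\lambda_{u|y}=\lambda_{\paa^n(y)|y}$. The sets $\dzin{n}{u}$ are pairwise disjoint as $u$ runs over a generation, and $\slam^n$ maps $\ell^2$ of that generation into $\ell^2$ of the corresponding descendant generation. Hence $\ob{\slam^n}$ consists of functions $f$ for which, on each block $\dzin{n}{u}$, the restriction $f|_{\dzin{n}{u}}$ is a scalar multiple of the vector $\big(\lambda_{u|y}\big)_{y\in\dzin{n}{u}}$. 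This block-wise description passes to the closure. If $f_k=\slam^n g_k\to f$, then on each block the restrictions converge, and the one-dimensional span $\cbb\,\slam^n e_u$ is closed. Therefore $f|_{\dzin{n}{u}}\in\cbb\,\slam^n e_u$ for every $f\in\overline{\ob{\slam^n}}$. Rather than describing the range only on finitely supported functions, I would prove this inclusion directly through the orthogonal projection onto $\ell^2(\dzin{n}{u})$.

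\textbf{Step 2: estimate.} Write $f|_{\dzin{n}{u}}=c\,\slam^n e_u$. Then $f(v)=c\,\lambda_{u|v}$, and
\begin{align*}
\|f\|^2\Ge |c|^2\|\slam^n e_u\|^2 .
\end{align*}
Consequently
\begin{align*}
|f(v)|^2=|c|^2|\lambda_{u|v}|^2\Le \|f\|^2\,\frac{|\lambda_{\paa^n(v)|v}|^2}{\|\slam^n e_{\paa^n(v)}\|^2}.
\end{align*}
If $f\in\bigcap_n\overline{\ob{\slam^n}}$, this bound holds for every $n$, and letting $n\to\infty$ with \eqref{bigcook02} gives $f(v)=0$. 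Since $v$ was arbitrary, $f=0$, and Corollary \ref{cook01} yields type I.

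The main obstacle is Step 1, namely justifying that the closure of the range still has the one-dimensional block structure. The denominator $\|\slam^n e_u\|$ is nonzero because the weights are nonzero and the tree is leafless, so it causes no trouble. The block structure itself follows because $P_{\ell^2(\dzin{n}{u})}\slam^n=\slam^n P_{\cbb e_u}$, which is bounded with closed one-dimensional range. Centeredness is used only through Corollary \ref{cook01}.
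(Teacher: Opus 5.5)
Your proof is correct and follows the paper's argument: you reduce via Corollary~\ref{cook01} to showing $\bigcap_{n}\overline{\ob{\slam^n}}=\{0\}$, then bound $|f(v)|^2$ by $\|f\|^2\,|\lambda_{\paa^n(v)|v}|^2/\|\slam^n e_{\paa^n(v)}\|^2$ using the block $\dzin{n}{\paa^n(v)}$. Your treatment of the closure via $P_{\ell^2(\dzin{n}{u})}\slam^n=\slam^n P_{\cbb e_u}$ is actually more careful than the paper's, which writes $f=\slam^n g_n$ exactly on the grounds that $\slam|_{\hh_{IV}}$ is unitary; type IV only guarantees a unitary phase, so the range of $\slam^n$ on $\hh_{IV}$ need only be dense, and a projection or continuity argument like yours is needed.
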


\begin{proof}
By Corollary \ref{cook01}, $\slam =\slam|_{\hh_{I}}\oplus\slam|_{\hh_{IV}}$, with $\slam|_{\hh_{N}}$ being type $N$ centered operator, $N\in\{I, II\}$. We show that $\mathcal{H}_{IV} = \bigcap_{n=1}^\infty \overline{\ob{\slam^n}}$ is trivial. 

Let $f \in \mathcal{H}_{IV}$. Since $S_\lambda|_{\hh_{IV}}$ is unitary, we have $\ob{\slam|_{\hh_{IV}}^n} = \mathcal{H}_{IV}$ for all $n \in \mathbb{N}$. Therefore, for each $n \in \mathbb{N}$, there exists $g_n \in \ell^2(V)$ such that $f = S_\lambda^n g_n$. Fix $v \in V$ and $n \in \mathbb{N}$. Let $u_n = \paa^n(v)$. By \cite[Lemma 6.1.1 (i)]{2012-mams-j-j-s} we have $f(w) = (S_\lambda^n g_n)(w) = \lambda_{u_n|w} g_n(u_n)$ for every $w\in \dzin{n}{u_n}$. Thus
\begin{align*}
|g_n(u_n)|^2 = \frac{|f(v)|^2}{|\lambda_{u_n|v}|^2}.
\end{align*}
Using \cite[Lemma 6.1.1 (ii)]{2012-mams-j-j-s}, we obtain
\begin{align*}
|f(v)|^2 \frac{\|S_\lambda^n e_{u_n}\|^2}{|\lambda_{u_n|v}|^2} & = |g_n(u_n)|^2 \|S_\lambda^n e_{u_n}\|^2= \sum_{w \in \dzin{n}{u_n}} |\lambda_{u_n|w}|^2 |g_n(u_n)|^2\\
&= \sum_{w \in \dzin{n}{u_n}} |f(w)|^2 \leqslant \|f\|^2.
\end{align*}
This yields
\begin{align*}
|f(v)|^2 \le \|f\|^2 \frac{|\lambda_{\paa^n(v)|v}|^2}{\|S_\lambda^n e_{\paa^n(v)}\|^2},\quad v\in V.
\end{align*}
Using \eqref{bigcook02} we get $f=0$, which proves that $\mathcal{H}_{IV} = \{0\}$. Consequently, $S_\lambda$ is type I centered.
\end{proof}
If $\tcal$ contains $\kappa$-ary subtree we get yet another criterion. 
\begin{cor}
Let $\tcal=(V,E)$ be a rootless directed tree such that $\card{\dzi{v}} \ge \kappa$ for all $v \in V$, where $\kappa \geqslant 2$ is an integer. Let $S_\lambda \in \boldsymbol{B}(\ell^2(V))$ be a centered weighted shift on $\tcal$. If
\begin{align}\label{kanary01}
\sup \{|\lambda_v|\colon v \in V\}< \sqrt{\kappa} \, \inf \{|\lambda_v|\colon v \in V\}.
\end{align}
then $S_\lambda$ is type I centered.
\end{cor}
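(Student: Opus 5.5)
The plan is to reduce the statement to Proposition \ref{bigcook01} and then verify condition \eqref{bigcook02} directly, using the estimate already seen in Example \ref{exa:binary_type1}. Put $M=\sup\{|\lambda_v|\colon v\in V\}$ and $m=\inf\{|\lambda_v|\colon v\in V\}$.

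First I check the hypotheses of Proposition \ref{bigcook01}. Since $\slam$ is bounded, $|\lambda_v|^2\Le\sum_{u\in\dzi{\pa v}}|\lambda_u|^2\Le\|\slam\|^2$ for every $v$, so $M<\infty$. Condition \eqref{kanary01} then forces $m>0$, because otherwise its right-hand side would be $0$. Hence all weights are nonzero. The assumption $\card{\dzi{v}}\Ge\kappa\Ge 2$ for all $v$ makes $\tcal$ leafless, and $\tcal$ is rootless by hypothesis. Together with the assumption that $\slam$ is centered, this gives every hypothesis of Proposition \ref{bigcook01}. It therefore suffices to prove \eqref{bigcook02}.

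Fix $v\in V$ and $n\in\nbb$, and set $u=\paa^n(v)$. For the numerator, $|\lambda_{u|v}|^2=\prod_{j=0}^{n-1}|\lambda_{\paa^j(v)}|^2\Le M^{2n}$. For the denominator, \cite[Lemma 6.1.1 (ii)]{2012-mams-j-j-s} gives $\|\slam^n e_u\|^2=\sum_{w\in\dzin{n}{u}}|\lambda_{u|w}|^2$. Each product $\lambda_{u|w}$ has $n$ factors, so each term is at least $m^{2n}$. Since distinct children have disjoint sets of descendants, induction on $n$ shows that $\dzin{n}{u}$ has at least $\kappa^n$ elements. Hence $\|\slam^n e_u\|^2\Ge\kappa^n m^{2n}$. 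Combining the two bounds,
\begin{align*}
\frac{|\lambda_{\paa^n(v)|v}|^2}{\|\slam^n e_{\paa^n(v)}\|^2}\Le\Big(\frac{M^2}{\kappa\, m^2}\Big)^n .
\end{align*}
By \eqref{kanary01} the base $M^2/(\kappa m^2)$ is strictly less than $1$, so the right-hand side tends to $0$ as $n\to\infty$. This is \eqref{bigcook02}, and Proposition \ref{bigcook01} then yields that $\slam$ is type I centered.

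I do not expect a serious obstacle. The only points needing care are, first, deriving $m>0$ and $M<\infty$ from \eqref{kanary01} and boundedness. Second, the cardinality count $\card{\dzin{n}{u}}\Ge\kappa^n$ must hold even when some vertices have infinitely many children; this is harmless, since only a lower bound on the sum is needed.
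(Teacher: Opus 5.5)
Your proof is correct and follows the paper's argument essentially step for step. You reduce to Proposition \ref{bigcook01}: leaflessness comes from $\card{\dzi{v}}\Ge\kappa$, and nonzero weights come from \eqref{kanary01}. You then bound the numerator by $M^{2n}$ and the denominator by $\kappa^n m^{2n}$, so the ratio is at most $\big(M^2/(\kappa m^2)\big)^n\to 0$. Your remarks that $M<\infty$ by boundedness and hence $m>0$ simply spell out what the paper leaves as obvious.
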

\begin{proof}
Clearly, $\tcal$ is leafless. Also, \eqref{kanary01} implies that all weights are nonzero (and thus $\tcal$ is at most countable). Thus, we can apply Proposition \ref{bigcook01}. 

Let $M=\sup \{|\lambda_v|\colon v \in V\}$ and $m=\inf \{|\lambda_v|\colon v \in V\}$. Fix $v \in V$ and $n \in \mathbb{N}$. We have
\begin{align}\label{kanary02}
|\lambda_{\paa^n(v)|v}|^2 = \prod_{j=0}^{n-1} |\lambda_{\paa^j(v)}|^2 \leqslant M^{2n}.
\end{align}
Since $\card{\dzi{u}} \geqslant \kappa$ for all $u \in V$, we deduce that $\card{\dzin{n}{\paa^n{v}}}\geqslant \kappa^n$. On the other hand, by \cite[Lemma 6.1.1 (ii)]{2012-mams-j-j-s}, we have
\begin{align}\label{kanary03}
\|S_\lambda^n e_{\paa^n(v)}\|^2 = \sum_{w \in \dzin{n}{\paa^n(v)}} |\lambda_{par^n(v)|w}|^2 \geqslant \kappa^n m^{2n}.
\end{align}
Combining \eqref{kanary02} and \eqref{kanary03} we obtain
\begin{align*}
0 < \frac{|\lambda_{\paa^n(v)|v}|^2}{\|S_\lambda^n e_{\paa^n(v)}\|^2} \leqslant \frac{M^{2n}}{\kappa^n m^{2n}} = \left( \frac{M^2}{\kappa m^2} \right)^n.
\end{align*}
Clearly, by \eqref{kanary01}, this implies 
\begin{align*}
\lim_{n \to \infty} \frac{|\lambda_{\paa^n(v)|v}|^2}{\|S_\lambda^n e_{\paa^n(v)}\|^2} = 0.
\end{align*}
Since $v \in V$ was chosen arbitrarily, Proposition \ref{bigcook01} guarantees that $S_\lambda$ is type I centered.
\end{proof}
Example \ref{exa:binary_type1} shows that for the unweighted shift on the full binary tree (which is type I centered), the ratio in \eqref{bigcook02} is equal to $(1/2)^n$. Thus, the limit is $0$. It is natural to ask if condition \eqref{bigcook02} is necessary for $\slam$ to be type I centered.
\begin{opq}
Does there exist a rootless and leafless directed tree $\tcal$ and a weighted shift $\slam$ (with nonzero weights) on $\tcal$ such that $\slam$ is type I centered but
\begin{align*}
\limsup_{n \to \infty} \frac{|\lambda_{\paa^n(v)|v}|^2}{\|S_\lambda^n e_{\paa^n(v)}\|^2} > 0
\end{align*}
for some $v \in V$?
\end{opq}
Another property that excludes a possibility of $\slam$ being type IV is $\tcal$ being rooted and leafless.
\begin{pro}\label{root}
Let $\tcal=(V,E)$ be a rooted and leafless directed tree. Let $\slam\in\ogr{\ell^2(V)}$ be a centered weighted shift on $\tcal$ with nonzero weights $\lambdab=\{\lambda_v\}_{v \in V^{\circ}}$. Then $S_\lambda$ is type I centered.
\end{pro}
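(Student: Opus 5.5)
Since all weights are nonzero, $\slam$ is injective: by \cite[Proposition 3.1.7]{2012-mams-j-j-s} injectivity of a {\em wsdt} with nonzero weights holds exactly when $\tcal$ is leafless. Lemma \ref{red} therefore reduces the claim to $\slam=\slam|_{\hh_I}\oplus\slam|_{\hh_{IV}}$, and it suffices to show that $\hh_{IV}=\{0\}$. Arguing as in Corollary \ref{cook01}, $\hh_{IV}\subseteq\bigcap_{n=1}^\infty\overline{\ob{\slam^n}}$ because $\slam|_{\hh_{IV}}$ is unitary, so $\ob{\slam|_{\hh_{IV}}^n}=\hh_{IV}$. The task is thus to prove that $\bigcap_{n=1}^\infty\overline{\ob{\slam^n}}=\{0\}$.

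The root supplies this. By \cite[Lemma 6.1.1]{2012-mams-j-j-s}, $(\slam^n g)(v)$ depends only on $g(\paa^n(v))$ and vanishes when $\paa^n(v)$ is undefined. Hence every $f\in\ob{\slam^n}$ is supported on $\bigcup_{k\geqslant n}\dzin{k}{\koo}$, the set of vertices of depth at least $n$. Supports are preserved under closure, so each $f\in\overline{\ob{\slam^n}}$ vanishes on all vertices of depth less than $n$.

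Consequently, an $f$ in the intersection vanishes at every vertex of every finite depth. In a rooted tree every vertex has finite depth (\cite[Corollary 2.1.5]{2012-mams-j-j-s}: $V=\bigcup_n\dzin{n}{\koo}$), so $f=0$. This gives $\hh_{IV}=\{0\}$, and $\slam$ is type I centered.

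The argument is routine; the points that need care are citing injectivity for leafless trees with nonzero weights, and noting that the rooted hypothesis is exactly what guarantees $V=\bigcup_n\dzin{n}{\koo}$.
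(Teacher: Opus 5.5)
Your proof is correct and follows essentially the same route as the paper. Both arguments use injectivity via \cite[Proposition 3.1.7]{2012-mams-j-j-s} and Lemma \ref{red} to reduce to the type IV part. Both then observe that every element of $\overline{\ob{\slam^n}}$ vanishes on the first $n$ generations, which together with $V=\bigcup_k\dzin{k}{\koo}$ forces $\bigcap_n\overline{\ob{\slam^n}}=\{0\}$. The only difference is that you spell out why $\hh_{IV}\subseteq\bigcap_n\overline{\ob{\slam^n}}$ (unitarity of $\slam|_{\hh_{IV}}$), which the paper leaves implicit.
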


\begin{proof}
Since $\tcal$ is leafless and the weights are nonzero, it follows from \cite[Proposition 3.1.7]{2012-mams-j-j-s} that $S_\lambda$ is injective. By Lemma \ref{red}, $S_\lambda=\slam|_{\hh_I}\oplus \slam|_{\hh_{IV}}$, so we need to show that $\bigcap_{n=1}^\infty \overline{\ob{\slam^n}} = \{0\}$.

Since $\tcal$ has a root, by \cite[Corollary 2.1.5]{2012-mams-j-j-s}, $V = \bigsqcup_{k=0}^\infty \dzin{k}{\koo}$. For $u\in V$, let $\text{gen}(u)$ denote a (unique) $k\in\zbb_+$ such that $u \in \dzin{k}{\koo}$. It is easy to see that for $f \in \ell^2(V)$ and $n \in \mathbb{N}$ we have $\big(S_\lambda^n f\big)(v) = 0$ for all $v \in V$ such that $\text{gen}(v) < n$. Hence, if $g \in \overline{\ob{\slam^n}}$, then $g(v)=0$ for every $v\in \bigcup_{k=0}^{n-1} \dzin{k}{\koo}$. Since $V=\bigcup_{k=0}^{\infty} \dzin{k}{\koo}$, we deduce that $\bigcap_{n=1}^\infty \overline{\ob{\slam^n}}=\{0\}$. Thus, the type IV centered component is trivial. Hence, $S_\lambda$ is type I centered.
\end{proof}
\begin{pro}
Let $\tcal=(V,E)$ be a rooted directed tree. Let $\slam \in \ogr{\ell^2(V)}$ be a centered weighted shift on $\tcal$ with nonzero weights $\lambdab=\{\lambda_v\}_{v \in V^{\circ}}$. If $\tcal$ has at least one leaf, then the tree $\tcal$ has finite depth and $S_\lambda$ is type III centered.
\end{pro}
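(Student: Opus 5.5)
The plan is to use Proposition \ref{burasuka01} to propagate the leaf through generations, then deduce that $\slam$ is nilpotent. Since $\tcal$ is rooted, $V=\bigsqcup_{k=0}^\infty \dzin{k}{\koo}$ by \cite[Corollary 2.1.5]{2012-mams-j-j-s}, and each generation $[v]_\sim$ is exactly one level $\dzin{k}{\koo}$. Indeed, two vertices $u_1,u_2$ with $\paa^{m}(u_1)=\paa^m(u_2)$ for some $m$ lie at the same level, and conversely any two vertices at level $k$ satisfy $\paa^k(u_1)=\paa^k(u_2)=\koo$. Because all weights are nonzero, the product condition in Proposition \ref{burasuka01} is automatic. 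Centeredness therefore gives that $c_k:=\sum_{y\in\dzi{u}}|\lambda_y|^2$ depends only on the level $k$ of $u$; this applies to every level $k\geqslant1$, and level $0$ consists of the root alone.

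Now let $\ell$ be a leaf at level $k_0$. Then $c_{k_0}=0$ (the case $k_0=0$ means $V=\{\koo\}$, which is trivial). Since the weights are nonzero, $c_{k_0}=0$ forces $\dzi{u}=\varnothing$ for every $u$ at level $k_0$, so $\dzin{k_0+1}{\koo}=\varnothing$. Hence all further levels are empty, and $V=\bigcup_{k=0}^{k_0}\dzin{k}{\koo}$. This is the finite depth claim.

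For the type, the same observation as in the proof of Proposition \ref{root} (now read forwards) applies: $(\slam^n f)(v)=0$ whenever $\mathrm{gen}(v)<n$. With $n=k_0+1$ every vertex has $\mathrm{gen}(v)<n$, so $\slam^{k_0+1}=0$ and $\slam$ is nilpotent. It follows that $\bigcap_{n}\overline{\ob{\slam^n}}=\{0\}$ and $\bigcap_n \overline{\ob{|\slam^n|}}=\{0\}$, because $|\slam^{n}|=0$ for $n>k_0$. By the characterization of the types recalled before Proposition \ref{wco_types}, $\slam$ is type III centered. Alternatively, Proposition \ref{wco_types}(ii) gives the same conclusion, since $\hsf_{\phi^n,w_n}\equiv0$ for large $n$.

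The main obstacle is the identification of generations with levels in the rooted case, together with the correct reading of Proposition \ref{burasuka01}. Its hypothesis with $u_1,u_2\in\dzi{v}$ and $\paa^{n+1}(u_1)=\paa^{n+1}(u_2)$ has to be interpreted as: any two vertices of a common generation whose ancestral weights are nonzero have equal child sums. Once that reading is fixed, the rest is routine bookkeeping.
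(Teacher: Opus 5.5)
Your proof is correct, but it takes a different route from the paper's.

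\emph{The paper's route.} It does not use Proposition~\ref{burasuka01} here. It first discards $\hh_{II}$ and $\hh_{IV}$ via $\bigcap_{n}\overline{\ob{\slam^n}}=\{0\}$, which is the level argument of Proposition~\ref{root}. For a leaf $u$, the vector $e_u\in\jd{\slam}$ must lie in $\hh_{III}$ because $\slam|_{\hh_I}$ is injective. This is pushed up to $e_{\koo}\in\hh_{III}$ using $\slam^*e_u=\bar\lambda_u e_{\pa{u}}$ and reducibility. Finally, infinite branches are excluded because they would give $|\slam^n|e_{\koo}=\|\slam^n e_{\koo}\|e_{\koo}\neq 0$ for all $n$, contradicting the type III condition on $\hh_{III}$. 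So the paper uses centeredness only through the Morrel--Muhly decomposition.

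\emph{Your route.} You use centeredness through the generation-wise constancy of $u\mapsto\sum_{y\in\dzi{u}}|\lambda_y|^2$, i.e.\ Theorem~\ref{dan01}\,(F). This makes the entire level of the leaf consist of leaves and gives $\slam^{k_0+1}=0$ directly.

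\emph{Comparison.} Your argument yields more: a uniform bound on the depth and an explicit nilpotency index. The paper's proof by itself only rules out infinite branches, which does not bound the depth when vertices can have infinitely many children. The paper obtains uniform depth only in the subsequent remark, by essentially your generation argument. On the other hand, the paper's route does not depend on the somewhat garbled formulation of Proposition~\ref{burasuka01}, which you rightly flagged and read correctly.

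Two minor points. First, Proposition~\ref{burasuka01} assumes $\card{V}\Le\aleph_0$; this holds here because $\slam$ is bounded with nonzero weights, but you should say so. Second, Proposition~\ref{wco_types}\,(ii) alone only gives ``type II or type III''. Your alternative therefore still needs $\bigcap_n\overline{\ob{\slam^n}}=\{0\}$ to exclude type II.
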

\begin{proof}
The same argument as in the proof of Proposition \ref{root} proves that
$\bigcap_{n=1}^\infty \overline{\ob{\slam^n}} = \{0\}$. This implies that $\mathcal{H}_{II}$ and $\mathcal{H}_{IV}$ are trivial. Thus, $\slam=\slam|_{\hh_{I}}\oplus \slam|_{\hh_{III}}$.

Let $u \in V$ be a leaf. Then $\dzi{u} = \varnothing$, which implies $S_\lambda e_u = 0$. Since $\slam|_{\hh_I}$ is injective, $e_u \in \mathcal{H}_{III}$. In particular, $\{e_v\colon v \text{ is a leaf}\}\subseteq \hh_{III}$. We may assume that $u\neq \koo$. Since $S_\lambda^* e_u = \bar{\lambda}_u e_{\pa{u}}$, $\lambda_u\neq 0$ and $\hh_{III}$ is reducing for $S_\lambda^*$, we get $e_{\pa{u}} \in \hh_{III}$. By induction, $e_{root}\in \hh_{III}$.

We now suppose, contrary to our claim, that $\tcal$ contains an infinite branch, i.e., there is $\{v_n\}_{n\in\zbb_+}\in V$ such that $v_0=\koo$ and $v_{k+1}\in\dzi{v_k}$ for $k\in\zbb_+$. Then
\begin{align}\label{3000}
\|\slam^n e_{\koo}\|^2=\sum_{u\in \dzin{n}{\koo}}|\lambda_{\koo| u}|^2\geqslant |\lambda_{\koo|v_n}|^2>0, \quad n\in \nbb.
\end{align}
On the other hand, since $e_{root}\in \hh_{III}$, we have
\begin{align}\label{3001}
|\slam|_{\hh_{III}}|^n e_{\koo} = |\slam^n| e_{\koo} = \|\slam^n e_{\koo}\| \,e_{\koo},\quad n\in \nbb.
\end{align}
In view of \eqref{3000}, \eqref{3001} implies that $\bigcap_{n=1}^\infty \overline{\ob{|\slam|_{\hh_{III}}}|^n} \neq \{0\}$, a contradiction. Therefore, $\tcal$ has no infinite branches. Consequently, $\{e_v\colon v\in V\}\subseteq \hh_{III}$, which yields $\slam=\slam|_{\hh_{III}}$.
\end{proof}
\begin{rem}
The property of being centered imposes a structural rigidity on the tree $\tcal$ if leaves are present. It follows from Proposition \ref{burasuka01} that the map $u \mapsto \sum_{v \in \dzi{u}} |\lambda_v|^2$ is constant on every generation of $\tcal$. If $\tcal$ has a leaf $u$ in the $n$-th generation (i.e., $u \in \dzin{n}{\koo}$), then $\sum_{v \in \dzi{u}} |\lambda_v|^2 = 0$. Consequently, for every $w \in \dzin{n}{\koo}$, we must have $\sum_{v \in \dzi{w}} |\lambda_v|^2 = 0$, which implies $\dzi{w} = \varnothing$. Thus, if a $\slam$ is centered and the tree $\tcal$ has a leaf, all vertexes in the same generation are leaves. Since $\tcal$ is connected, this implies that $\tcal$ has a uniform finite depth and all maximal branches have the same length.

The property that all leaves belong to the same generation holds for any (connected) directed tree, rooted or not. Indeed, any two vertexes $u, v \in V$ share a common ancestor $w \in V$ (i.e., $w = \paa^n(u) = \paa^m(v)$ for some $n,m \in \zbb_+$). For a centered $\slam$, the condition $\sum_{v \in \dzi{u}} |\lambda_v|^2 = 0$ (which characterizes leaves) propagates across all the vertexes having common ancestor.
\end{rem}

\begin{exa}\label{exa:Z_minus_type2}
Let $\tcal=(V,E)$ be the directed tree with $V = \{-k \colon k \in \zbb_+\}$ and $E = \{(-k-1, -k) \colon k \in \zbb_+\}$. Let $\slam$ be the weighted shift on $\tcal$ with weights $\lambda_v = 1$ for all $v \in V$. Note that this tree is rootless but has a leaf at $v=0$.
\begin{figure}[ht]
\begin{center}
\begin{tikzpicture}[scale=0.8, transform shape]
\tikzset{every node/.style={circle,fill=gray!30}}

% Nodes
\node[fill=none] (inf) at (-8,0) {};
\node (n3) [font=\footnotesize, inner sep=3pt] at (-6,0) {$-3$};
\node (n2) [font=\footnotesize, inner sep=3pt] at (-4,0) {$-2$};
\node (n1) [font=\footnotesize, inner sep=3pt] at (-2,0) {$-1$};
\node (n0) [font=\footnotesize, inner sep=3pt] at (0,0) {$0$};

% Edges (Parent -> Child)
% Parent of -k is -k-1. Arrow goes from -k-1 to -k.
\draw[dashed,->] (inf) -- (n3);
\draw[->] (n3) -- (n2);
\draw[->] (n2) -- (n1);
\draw[->] (n1) -- (n0);

\end{tikzpicture}
\end{center}
\caption{\label{fig:Z_minus_tree} The directed tree $\tcal$ isomorphic to $\zbb_-$, considered in Example \ref{exa:Z_minus_type2}.}
\end{figure}

We verify the conditions for $\slam$ to be type II centered:
\begin{enumerate}
    \item $\bigcap_{n=1}^\infty \ob{\slam^n} = \ell^2(V)$.
    For any $v = -k \in V$, we have $\slam e_{-k-1} = e_{-k}$. Thus every basis vector is in the range of $\slam$, and consequently in the range of $\slam^n$ for all $n \in \nbb$ (since $\slam$ acts as a surjection on the basis set). Thus the intersection is the whole space $\ell^2(V)$.

    \item $\bigcap_{n=1}^\infty \overline{\ob{|\slam^n|}} = \{0\}$.
    Recall that $\overline{\ob{|\slam^n|}} = \overline{\ob{\slam^{*n}}}$. By \cite[Proposition 3.4.1]{2012-mams-j-j-s}, $\slam^* e_{-k} = e_{-k-1}$. The range of $\slam^{*n}$ is the closed linear span of $\{e_{-k} \colon k \ge n\}$. As $n \to \infty$, the support of any function in the range moves to $-\infty$. The only vector belonging to all such subspaces is the zero vector.
\end{enumerate}
Thus, $\slam$ is a type II centered weighted shift.
\end{exa}
We leave the proof of the following proposition to the reader.
\begin{pro}
Let $\tcal=(V,E)$ be a directed tree isomorphic to $\zbb_-$ (see Figure \ref{fig:Z_minus_tree}). Let $\slam \in \ogr{\ell^2(V)}$ be a centered weighted shift on $\tcal$ with nonzero weights. Then $\slam$ is type II centered.
\end{pro}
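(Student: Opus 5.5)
Label the vertices of $\tcal$ as $V=\{-k\colon k\in\zbb_+\}$, with $\pa{-k}=-k-1$. Then $\tcal$ is rootless, $V^\circ=V$, and every weight $\lambda_{-k}$ is nonzero. I will follow Example \ref{exa:Z_minus_type2} and use the range characterization of type II from the list preceding Proposition \ref{wco_types}. Note that $\slam$ is automatically centered: each generation is a single vertex, so Proposition \ref{burasuka01} holds trivially. The plan is to verify directly that
\begin{align*}
\bigcap_{n=1}^\infty \overline{\ob{\slam^n}}=\ell^2(V)\quad\text{and}\quad \bigcap_{n=1}^\infty \overline{\ob{|\slam^n|}}=\{0\}.
\end{align*}

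For the first condition, compute $\slam e_{-k-1}=\lambda_{-k}e_{-k}$, so $e_{-k}=\lambda_{-k}^{-1}\slam e_{-k-1}$. Iterating gives $e_{-k}=\big(\lambda_{-k}\lambda_{-k-1}\cdots\lambda_{-k-n+1}\big)^{-1}\slam^n e_{-k-n}$. Hence every basis vector lies in $\ob{\slam^n}$ for every $n$, and the closed range of each $\slam^n$ is all of $\ell^2(V)$.

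For the second condition, use $\overline{\ob{|\slam^n|}}=\jd{\slam^n}^\perp$. Since the leaf $0$ has no children, $\slam e_0=0$, and more generally $\slam^n e_{-j}=0$ for $j<n$ (after $j$ steps we reach $e_0$, then $0$). For $j\geqslant n$, $\slam^n e_{-j}$ is a nonzero multiple of $e_{-j+n}$, and these vectors are mutually orthogonal. Therefore
\begin{align*}
\jd{\slam^n}=\overline{\operatorname{span}}\{e_{-j}\colon j<n\},\qquad \overline{\ob{|\slam^n|}}=\overline{\operatorname{span}}\{e_{-j}\colon j\geqslant n\}.
\end{align*}
A vector lying in all of these subspaces vanishes at every vertex, so the intersection is $\{0\}$.

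Both conditions together mean that $\slam$ is type II centered. The only point needing care is computing $\jd{\slam^n}$ exactly. Nonzero weights make $\slam^n$ injective on the span of $\{e_{-j}\colon j\geqslant n\}$, because the images are orthogonal with nonzero norms. This gives equality rather than a mere inclusion. Everything else is routine bookkeeping on the basis.
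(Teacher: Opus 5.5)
Your proof is correct and follows the route the paper intends. The paper leaves this proposition to the reader, but Example~\ref{exa:Z_minus_type2} performs exactly your two checks for unit weights: every basis vector lies in $\ob{\slam^n}$, and $\overline{\ob{|\slam^n|}}$ is the closed span of $\{e_{-k}\colon k\geqslant n\}$. Inserting the nonzero weights and computing via $\jd{\slam^n}^\perp$ instead of $\ob{\slam^{*n}}$ are only cosmetic changes.

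For the second condition the inclusion $\jd{\slam^n}\supseteq\{e_{-j}\colon j<n\}$ already suffices. So the exact computation of the kernel, which you single out as the delicate step, is not actually needed.
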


\section{Acknowledgments}
I wish to express my deep gratitude to Professor Marek Ptak and his wife for their hospitality and support during and beyond preparation of this paper.
\bibliographystyle{amsalpha}

\end{document}